
\documentclass[12pt]{amsproc}
\usepackage{amscd,amssymb,graphicx,xy}
\xyoption{all}

\newtheorem{theorem}{Theorem}[section]
\newtheorem{lemma}[theorem]{Lemma}
\newtheorem{proposition}[theorem]{Proposition}

\theoremstyle{definition}
\newtheorem{definition}[theorem]{Definition}
\newtheorem{notation}[theorem]{Notation}

\numberwithin{equation}{section}

\DeclareMathOperator{\comp}{\#}
\renewcommand{\d}{\partial}
\newcommand{\e}{\epsilon}
\DeclareMathOperator{\Hom}{Hom}
\DeclareMathOperator{\id}{id}
\DeclareMathOperator{\im}{im}
\DeclareMathOperator{\Or}{\mathcal{O}}
\renewcommand{\v}{\vee}
\newcommand{\w}{\wedge}

\begin{document}

\title{Complicial structures in the nerves of omega-categories}

\author{Richard Steiner}

\address{School of Mathematics and Statistics, University of
Glasgow, University Gardens, Glasgow, Great Britain G12 8QW}

\email{Richard.Steiner@glasgow.ac.uk}

\subjclass[2010]{18D05}

\keywords{complicial set,complicial identities, omega-category}

\begin{abstract}
It is known that strict omega-categories are equivalent through the nerve functor to complicial sets and to sets with complicial identities. It follows that complicial sets are equivalent to sets with complicial identities. We discuss these equivalences. In particular we give a conceptual proof that the nerves of omega-categories are complicial sets, and a direct proof that complicial sets are sets with complicial identities.
\end{abstract}

\maketitle

\section{Introduction} \label{S1}

This paper is concerned with the nerves of strict $\omega$-categories (all $\omega$-categories in this paper will be strict $\omega$-categories). The nerve functor, which was defined by Street in~\cite{B7}, takes $\omega$-categories to simplicial sets. The nerve functor is in fact an equivalence from the category of $\omega$-categories to a category of simplicial sets with additional structure, and this additional structure has been described in two different ways. Verity has shown in~\cite{B9} that $\omega$-categories are equivalent to complicial sets; in~\cite{B5} I have shown that $\omega$-categories are equivalent to sets with complicial identities. Here a complicial set is a simplicial set with a distinguished class of `thin' elements, and a set with complicial identities is a simplicial set with additional partial binary operations; in both cases the extra structure is subject to an appropriate list of axioms. The axioms for complicial sets are simple and elegant, but their calculational implications are not very obvious; the axioms for sets with complicial identities are algebraic, in fact equational, like the axioms for cubical nerves in~\cite{B1}, and they allow one to make concrete calculations as in~\cite{B4}.

It follows from a combination of \cite{B5}~and~\cite{B9} that the categories of strict $\omega$-categories, of complicial sets and of sets with complicial identities are all equivalent. The object of this paper is to shed some light on these equivalences. There are two parts. In the first part we give a conceptual proof that the nerve of an $\omega$-category is a complicial set (Theorem~\ref{T8.2}). This result was first proved computationally by Street in~\cite{B8} (actually in a rather stronger form), but the conceptual proof given here still seems to be enlightening. In the second part of the paper we show directly that a complicial set is a set with complicial identities (Theorem~\ref{T9.2}); this is straightforward, although the details are somewhat complicated. 

The theory of complicial sets is based on thin fillers for a class of `complicial' or `admissible' horns in simplicial sets with thin elements. The process of filling an $(n-1)$-dimensional complicial horn is therefore an $n$-ary operation with a rather complicated domain. The idea behind both parts of this paper is that an $(n-1)$-dimensional complicial horn in a complicial set is equivalent to a pair of arbitrary $(n-1)$-dimensional elements with a common face. This idea leads to the conceptual treatment of fillers in the first part and to the binary operations in the second part. The connection with binary operations appeared long ago in unpublished work of Street~\cite{B6}.

The structure of the paper is as follows. In Sections \ref{S2} and~\ref{S3} we recall the construction of the nerve of an $\omega$-category as a simplicial set and we show that it is a stratified simplicial set (a simplicial set with thin elements). In Section~\ref{S4} we recall the definition of complicial sets. In Section~\ref{S5} we discuss horns in the nerves of $\omega$-categories. In Section~\ref{S6} we discuss a class of horns represented by pairs of faces, which are later shown to be the complicial horns, and in Section~\ref{S7} we discuss the fillers for these horns. Section~\ref{S7} contains the bulk of the proof that the nerves of $\omega$-categories are complicial sets, and the proof is completed in Section~\ref{S8}.

The second part of the paper consists of Section~\ref{S9}. First we define sets with complicial identities; then we show that complicial sets are sets with complicial identities.

\section{Nerves as simplicial sets} \label{S2}

In this section we describe the nerve functor~$N$ from $\omega$-categories to simplicial sets. It was constructed by Street in~\cite{B7} in the following way: there is a sequence of $\omega$-categories $\Or_0,\Or_1,\ldots\,$ called \emph{orientals}, and there is a functor from the simplex category to $\omega$-categories given on objects by $[n]\mapsto\Or_n$; the nerve of an $\omega$-category~$C$ is the simplicial set $NC$ given by
$$N_n C=\Hom(\Or_n,C).$$
We will describe this construction in terms of chain complexes, following \cite{B2}~and~\cite{B3}.

We use a one-sorted description of $\omega$-categories, so that an $\omega$-category~$C$ is a set which serves as the set of morphisms for an infinite sequence of commuting category structures. The composition operations will be denoted $\comp_0,\comp_1,\ldots\,$, the left identity for~$x$ under~$\comp_n$ will be denoted $d_n^- x$, and the right identity for~$x$ under~$\comp_n$ will be denoted $d_n^+ x$. If $n<p$ then the identities for~$\comp_n$ must also be identities for~$\comp_p$; every member of~$C$ must be an identity for some operation~$\comp_n$.

When we say that the category structures commute we mean in particular that for $m\neq n$ there are equalities
$$d_m^\alpha d_n^\beta x=d_n^\beta d_m^\alpha x,\quad
d_m^\alpha(x\comp_n y)=d_m^\alpha x\comp_n d_m^\alpha y,$$
where the signs $\alpha$~and~$\beta$ are arbitrary. We deduce that the identities for any particular operation~$\comp_m$ form a sub-$\omega$-category. This gives the following result.

\begin{proposition} \label{P2.1}
Let $m$ be a nonnegative integer and let $C$ be an $\omega$-category generated by elements~$g$ such that $d_m^- g=d_m^+ g=g$. Then $d_m^- x=d_m^+ x=x$ for all $x\in C$.
\end{proposition}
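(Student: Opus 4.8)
The plan is to exhibit the set $S=\{x\in C:d_m^-x=d_m^+x=x\}$ as a sub-$\omega$-category of $C$. Since each generator $g$ satisfies $d_m^-g=d_m^+g=g$ by hypothesis, the generators all lie in $S$; as $C$ is generated by these elements, the smallest sub-$\omega$-category containing them is all of $C$, so $S=C$, which is exactly the assertion $d_m^-x=d_m^+x=x$ for all $x\in C$. Thus everything reduces to checking that $S$ is closed under every operation $\comp_n$ and under every $d_n^\alpha$. (Equivalently, one observes that $S$ is precisely the set of identities for $\comp_m$: an element fixed by $d_m^-$ equals one of the units of $\comp_m$, hence is an identity, and conversely a $\comp_m$-identity is fixed by both $d_m^-$ and $d_m^+$; the excerpt already records that the $\comp_m$-identities form a sub-$\omega$-category, so one may simply cite that.)

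For the direct verification, the case of an index $n\neq m$ is immediate from the commutation relations quoted before the proposition: if $x\in S$ then $d_m^\beta(d_n^\alpha x)=d_n^\alpha(d_m^\beta x)=d_n^\alpha x$, so $d_n^\alpha x\in S$; and if $x,y\in S$ with $x\comp_n y$ defined then $d_m^\beta(x\comp_n y)=d_m^\beta x\comp_n d_m^\beta y=x\comp_n y$, so $x\comp_n y\in S$. For the index $n=m$ the commutation relations do not apply, but here only the category axioms for $\comp_m$ are needed: $d_m^\alpha x$ is always a unit of $\comp_m$, hence fixed by $d_m^-$ and $d_m^+$, so $d_m^\alpha x\in S$; and if $x,y\in S$ with $x\comp_m y$ defined, then $d_m^+x=d_m^-y$ together with $d_m^+x=x$ and $d_m^-y=y$ forces $x=y$, whence $x\comp_m y=x\comp_m d_m^+x=x\in S$.

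Once closure is established, $S$ is a sub-$\omega$-category of $C$ containing the generating set, so $S=C$ and the proposition follows. There is no substantial obstacle; the only point demanding attention is that the commutation identities are hypotheses only for distinct indices, so the operation $\comp_m$ and its units $d_m^\pm$ must be handled by the separate elementary argument above rather than by interchange.
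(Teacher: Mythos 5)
Your proof is correct and follows the same route as the paper: the paper simply observes that the identities for $\comp_m$ form a sub-$\omega$-category (deduced from the commutation relations) and concludes immediately, while you supply the closure verifications explicitly, including the elementary $n=m$ case that the interchange laws do not cover.
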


Next we recall the construction of orientals in terms of chain complexes, as in \cite{B2}~and~\cite{B3}. We use a category of free augmented directed complexes defined as follows.

\begin{definition} \label{D2.2}
A \emph{free augmented directed complex} is an augmented chain complex of free abelian groups concentrated in nonnegative dimensions, together with a prescribed basis for each chain complex. A \emph{morphism of free augmented directed complexes} is an augmentation-preserving chain map which takes sums of basis elements to sums of basis elements.
\end{definition}

Note that the word sums is to be taken literally: a sum of basis elements is an expression 
$$a_1+\ldots+a_k$$
with $k\geq 0$ such that $a_1,\ldots,a_k$ are basis elements; no subtraction is allowed.

The main example used in this paper is the chain complex of the $n$-simplex. This chain complex will be denoted~$\Delta_n$. The $q$-dimensional chain group of~$\Delta_n$ has prescribed basis consisting of the $(q+1)$-tuples of integers
$$[a_0,\ldots,a_q]\quad (0\leq a_0<a_1<\ldots<a_q\leq n);$$
the boundary of $[a_0,\ldots,a_q]$ for $q>0$ is the alternating sum
$$[a_1,\ldots,a_q]-[a_0,a_2,\ldots,a_q]+\ldots+(-1)^q[a_0,\ldots,a_{q-1}];$$
the augmentation is given by $\e[a_0]=1$.

We define morphisms $\d_i^\v$~and~$\e_i^\v$ of free augmented directed complexes in the obvious way, so that they correspond contravariantly to face and degeneracy operations. For $n>0$ and for $0\leq i\leq n$ the face operation
$$\d_i^\v\colon\Delta_{n-1}\to\Delta_n$$
is given on basis elements by
$$\d_i^\v[a_0,\ldots,a_q]=[a_0',\ldots,a_q']$$
such that
$$a_j'=\begin{cases}
a_j& (0\leq a_j\leq i-1),\\
a_{j+1}& (i\leq j\leq n-1).
\end{cases}$$
For $0\leq i\leq n$ the degeneracy operation
$$\e_i^\v\colon\Delta_{n+1}\to\Delta_n$$
is given on a basis element $[a_0,\ldots,a_q]$ as follows: if the terms~$a_j$ do not include both $i$~and $i+1$ then
$$\e_i^\v[a_0,\ldots,a_q]=[a_0'',\ldots,a_q'']$$
such that
$$a_j'=\begin{cases}
a_j& (0\leq a_j\leq i),\\
a_{j-1}& (i+1\leq j\leq n+1);
\end{cases}$$
if the terms~$a_j$ do include both $i$~and $i+1$ then
$$\e_i^\v[a_0,\ldots,a_q]=0.$$
These morphisms satisfy the contravariant forms of the standard simplicial relations,
\begin{align*}
&\d_j^\v\d_i^\v=\d_i^\v\d_{j-1}^\v\quad (i<j),\\
&\e_j^\v\d_i^\v=\d_i^\v\e_{j-1}^\v\quad (i<j),\\
&\e_j^\v\d_j^\v=\e_j^\v\d_{j+1}^\v=\id,\\
&\e_j^\v\d_i^\v=\d_{i-1}^\v\e_j^\v\quad (i>j+1),\\
&\e_j^\v\e_i^\v=\e_i^\v\e_{j+1}^\v\quad (i\leq j).
\end{align*}
They therefore yield a functor from the simplex category to the category of augmented directed complexes.

It is straightforward to check that there is a functor~$\nu$ from free augmented directed complexes to $\omega$-categories defined as follows.

\begin{notation} \label{N2.3}
Let $K$ be a free augmented directed complex. Then $\nu K$ is the $\omega$-category with the following structure. The members of $\nu K$ are the infinite sequences 
$$(\,x_0^-,x_0^+\mid x_1^-,x_1^+\mid\ldots\,)$$
with finitely many non-zero terms such that $x_q^-$~and~$x_q^+$ are sums of prescribed $q$-dimensional basis elements in~$K$, such that
$$\e x_0^-=\e x_0^+ =1$$
and such that
$$\d x_q^- =\d x_q^+=x_{q-1}^+ - x_{q-1}^-\quad (q>0).$$
The operations $d_n^-$~and~$d_n^+$ are given by
$$d_n^\alpha(\,x_0^-,x_0^+\mid\ldots\,)
=(\,x_0^-,x_0^+\mid\ldots\mid x_{n-1}^-,x_{n-1}^+\mid x_n^\alpha,x_n^\alpha\mid 0,0\mid\ldots\,).$$
If $d_n^+x=d_n^- y=z$, say, then
$$x\comp_n y=x-z+y,$$
where the addition and subtraction are performed termwise.
\end{notation}

By composing the functor $[n]\mapsto\Delta_n$ with the functor~$\nu$ we obtain a functor from the simplex category to the category of $\omega$-categories. The $n$th oriental~$\Or_n$ may be defined by the formula
$$\Or_n=\nu\Delta_n.$$
The nerve of an $\omega$-category is then defined as follows.

\begin{definition} \label{D2.4}
The \emph{nerve} of an $\omega$-category~$C$ is the simplicial set $NC$ given by
$$N_n C=\Hom(\Or_n,C)=\Hom(\nu\Delta_n,C).$$
\end{definition}

\section{Nerves as stratified simplicial sets} \label{S3}

A complicial set is defined to be a simplicial set with a class of thin elements subject to certain conditions; in particular they must be stratified simplicial sets in the sense of Definition~\ref{D3.2} below. In this section we specify the thin elements in the nerves of $\omega$-categories and we then show that the nerves are stratified simplicial sets.

\begin{definition} \label{D3.1}
Let $C$ be an $\omega$-category. Then the \emph{thin} elements in $N C$ are the elements~$x$ in $N_n C$ with $n>0$ such that
$$d_{n-1}^- x u=d_{n-1}^+ x u=x u$$
for all $u\in\nu\Delta_n$
\end{definition}

Note that the elements of $\nu\Delta_n$ are of dimension at most~$n$, in the sense that $d_n^-u=d_n^+ u=u$ for all $u\in\nu\Delta_n$. A thin element is therefore a morphism that lowers dimension. 

We recall the definition of stratified simplicial sets from \cite{B9},~5.1.

\begin{definition} \label{D3.2}
A \emph{stratified simplicial set} is a simplicial set together with a prescribed class of thin elements of positive dimension which includes the degenerate elements. A \emph{morphism of stratified simplicial sets} is a morphism of simplicial sets taking thin elements to thin elements.
\end{definition}

We obtain the following result.

\begin{proposition} \label{P3.3}
The nerve functor is a functor from $\omega$-categories to stratified simplicial sets.
\end{proposition}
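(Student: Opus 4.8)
The plan is to verify the two conditions in Definition~\ref{D3.2}: that the nerve functor takes each $\omega$-category~$C$ to a simplicial set $NC$ equipped with a class of thin elements which (a)~contains all the degenerate elements, and (b)~is preserved by the simplicial maps $N f$ induced by $\omega$-functors $f\colon C\to C'$. Functoriality of $N$ on the underlying simplicial sets is already established by Definition~\ref{D2.4}, so the work is entirely about the thin elements specified in Definition~\ref{D3.1}.

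First I would treat the degenerate elements. A degenerate element of $N_n C$ with $n>0$ has the form $x=(\nu\e_i^\v)(y)$ for some $y\in N_{n-1}C=\Hom(\nu\Delta_{n-1},C)$, i.e. $x=y\circ\nu\e_i^\v$ as a morphism $\nu\Delta_n\to C$. To show $x$ is thin I must check $d_{n-1}^-(xu)=d_{n-1}^+(xu)=xu$ for every $u\in\nu\Delta_n$. The key point is that the degeneracy $\e_i^\v\colon\Delta_n\to\Delta_{n-1}$ lowers dimension, so after applying $\nu$ the composite $x=y\circ\nu\e_i^\v$ factors through an $\omega$-category which, by the structure of $\nu\Delta_{n-1}$, has no cells above dimension $n-1$; concretely, for every $u\in\nu\Delta_n$ the element $(\nu\e_i^\v)(u)$ already satisfies $d_{n-1}^-=d_{n-1}^+=\id$ on it (since the top-dimensional basis elements $[a_0,\dots,a_n]$ of $\Delta_n$ are killed by $\e_i^\v$, as noted in Section~\ref{S2}, so their images have no $n$-dimensional component), and hence $xu=y\bigl((\nu\e_i^\v)(u)\bigr)$ inherits $d_{n-1}^-(xu)=d_{n-1}^+(xu)=xu$ because $d_{n-1}^\alpha$ is an $\omega$-category operation commuting with the morphism $y$. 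This uses exactly the remark after Definition~\ref{D3.1} that a thin element is a dimension-lowering morphism, together with Proposition~\ref{P2.1} applied to $\nu\Delta_{n-1}$ (which is generated by the classes of the basis elements of dimension $\le n-1$, all of which are fixed by $d_{n-1}^\pm$).

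Next I would treat preservation under morphisms. Suppose $f\colon C\to C'$ is an $\omega$-functor and $x\in N_n C$ is thin, so $x\colon\nu\Delta_n\to C$ satisfies $d_{n-1}^-(xu)=d_{n-1}^+(xu)=xu$ for all $u$. Then $(Nf)(x)=f\circ x\colon\nu\Delta_n\to C'$, and for any $u\in\nu\Delta_n$ we have $\bigl((Nf)(x)\bigr)u=f(xu)$; since $f$ is an $\omega$-functor it commutes with $d_{n-1}^\pm$, so $d_{n-1}^\alpha\bigl(f(xu)\bigr)=f\bigl(d_{n-1}^\alpha(xu)\bigr)=f(xu)$, which says precisely that $(Nf)(x)$ is thin. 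This step is essentially a one-line naturality argument.

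The main obstacle is the degeneracy case: one must be careful that ``$u$ ranges over all of $\nu\Delta_n$'', not just over basis elements, so the argument that $(\nu\e_i^\v)(u)$ lives in the dimension-$(\le n-1)$ part of the $\omega$-category has to be made at the level of the sequences $(\,x_0^-,x_0^+\mid\dots\,)$ of Notation~\ref{N2.3}. The cleanest way is to observe that $\nu\e_i^\v$ factors as $\nu\Delta_n\to\nu\Delta_{n-1}$ and that $\nu\Delta_{n-1}$ satisfies $d_{n-1}^-v=d_{n-1}^+v=v$ for all its elements~$v$ (by Proposition~\ref{P2.1}, since it is generated by elements fixed by $d_{n-1}^\pm$); then thinness of $x=y\circ\nu\e_i^\v$ is immediate because $xu=y(v)$ with $v=(\nu\e_i^\v)(u)$ satisfying $d_{n-1}^\alpha v=v$, and $y$ commutes with $d_{n-1}^\alpha$. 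Everything else is routine.
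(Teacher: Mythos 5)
Your proposal is correct and follows essentially the same route as the paper: degenerate elements are thin because they factor through $\nu\Delta_{n-1}$, on which $d_{n-1}^{\pm}$ acts as the identity, and thinness is preserved by $Nf$ since $\omega$-functors commute with $d_{n-1}^{\pm}$. Your detour through Proposition~\ref{P2.1} to see that $d_{n-1}^{\alpha}v=v$ on $\nu\Delta_{n-1}$ is harmless but unnecessary, as this is immediate from Notation~\ref{N2.3} since $\Delta_{n-1}$ has no basis elements in dimension $n$ or above.
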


\begin{proof}
Let $C$ be an $\omega$-category. To show that $NC$ is a stratified simplicial set we must show that the degenerate elements in $NC$ are thin. To do this, let $x$ be a degenerate $n$-dimensional element, say 
$$x=\e_i y=y(\nu\e_i^\v)$$
with $y\colon\nu\Delta_{n-1}\to C$. We must show that $d_{n-1}^\alpha x u=x u$ for each sign~$\alpha$ and for all $u\in\nu\Delta_n$. But we have $d_{n-1}^\alpha v=v$ for all $v\in\nu\Delta_{n-1}$, hence
$$d_{n-1}^\alpha x u
=d_{n-1}^\alpha y(\nu\e_i^\v)u
=y d_{n-1}^\alpha(\nu\e_i^\v)u
=y(\nu\e_i^\v)u=x u$$
as required.

Now let $f\colon C\to C'$ be a morphism of $\omega$-categories. To show that $Nf$ is a morphism of stratified simplicial sets we must show that $(Nf)x$ is thin whenever $x$~is a thin element in $N_n C$. But for $u\in\nu\Delta_n$ and for each sign~$\alpha$ we have
$$d_{n-1}^\alpha(Nf)xu=d_{n-1}^\alpha f xu=fd_{n-1}^\alpha xu=fxu=(Nf)xu;$$
therefore $(Nf)x$ is thin as required.
 
This completes the proof.
\end{proof}

\section{Complicial sets} \label{S4}

A complicial set is a stratified simplicial set satisfying certain conditions on horn-fillers. In this section we give the definition, following \cite{B9}, 6--7.

Recall that, for $n>0$ and for $0\leq k\leq n$, an \emph{$(n-1)$-dimensional $k$-horn} in an arbitrary simplicial set is a sequence of $(n-1)$-dimensional elements 
$$z=(z_0,\ldots,z_{k-1},z_{k+1},\ldots,z_n)$$
such that
$$\d_i z_j=\d_{j-1}z_i\quad (i<j);$$
thus the elements are like the codimension one faces of an $n$-dimensional element~$x$, except that the face $\d_k x$ is omitted. A \emph{filler} for an $(n-1)$-dimensional $k$-horn $z=(z_0,\ldots,z_n)$ is an $n$-dimensional element~$x$ such that $\d_i x=z_i$ for $i\neq k$.

In a stratified simplicial set there are distinguished classes of elements and horns which are called complicial or admissible. Consider a geometrical $n$-simplex with vertices $0,1,\ldots,n$. It has faces spanned by the non-empty subsets of $\{0,\ldots,n\}$; the face spanned by the complement of a proper subset $\{j(1),\ldots,j(r)\}$ with $j(1)<\ldots<j(r)$ corresponds to the iterated face operator
$\d_{j(1)}\ldots\d_{j(r)}$. Given integers $n$~and~$k$ with $0<k<n$ we will say that an $n$-dimensional element~$x$ is $k$-complicial (the term $k$-admissible is also used) if the iterated face $\d_{j(1)}\ldots\d_{j(r)}x$ is thin whenever the complement of $\{j(1),\ldots,j(r)\}$ includes the integers $k-1,k,k+1$. We will also say that an $(n-1)$-dimensional horn $z=(z_0,\ldots,z_n)$ is $k$-complicial if the elements~$z_i$ are configured like the codimension one faces of a $k$-complicial $n$-dimen\-sion\-al element~$x$ with $\d_k x$ omitted.

Formally, it is most convenient to express these definitions recursively.

\begin{definition} \label{D4.1}
Let $n$~and~$k$ be integers with $0<k<n$. 

\textup{(1)} An \emph{$(n-1)$-dimensional $k$-complicial horn} in a stratified simplicial set is an $(n-1)$-dimensional $k$-horn
$$z=(z_0,\ldots,z_{k-1},z_{k+1},\ldots,z_n)$$
such that $z_i$~is a $(k-1)$-complicial element for $0\leq i<k-1$ and is a $k$-complicial element for $k+1<i\leq n$.

\textup{(2)} An \emph{$n$-dimensional $k$-complicial element} in a stratified simplicial set is a thin filler of an $(n-1)$-dimensional $k$-complicial horn.
\end{definition}

Thus a $1$-dimensional $1$-horn is always $1$-complicial, a $2$-dimensional element is $1$-complicial if and only if it is thin, etc.

A complicial set is then defined as follows.

\begin{definition} \label{D4.2}
A \emph{complicial set} is a stratified simplicial set satisfying the following conditions.

\textup{(1)} Every thin $1$-dimensional element is degenerate.

\textup{(2)} For $0<k<n$ every $(n-1)$-dimensional $k$-complicial horn has a unique thin filler.

\textup{(3)} For $0<k<n$, if $x$~is an $n$-dimensional $k$-complicial element and if $\d_{k-1}x$ and $\d_{k+1}x$ are thin, then $\d_k x$ is thin.
\end{definition}

We aim to show that the nerves of $\omega$-categories are complicial sets. Most of the work is in verifying the last two conditions; we can dispose of the first condition immediately.

\begin{proposition} \label{P4.3}
Let $x$ be a thin $1$-dimensional element in the nerve of an $\omega$-category~$C$. Then $x$~is degenerate.
\end{proposition}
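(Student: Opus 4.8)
The plan is to unpack what it means for a $1$-dimensional element $x\colon\nu\Delta_1\to C$ to be thin and to exhibit $x$ explicitly as a degeneracy. A $1$-dimensional element is a morphism $x\colon\nu\Delta_1\to C$; the key computational input is Notation~\ref{N2.3} applied to $K=\Delta_1$. The generating basis elements of $\Delta_1$ are $[0]$, $[1]$ in dimension $0$ and $[0,1]$ in dimension $1$, with $\d[0,1]=[1]-[0]$. So $\nu\Delta_1$ has three ``atomic'' elements of interest: the two $0$-dimensional identities corresponding to $[0]$ and $[1]$, and the $1$-dimensional element $u$ corresponding to $([0],[1]\mid[0,1],[0,1]\mid0,0\mid\dots)$, with $d_0^-u$ and $d_0^+u$ the elements built from $[0]$ and $[1]$ respectively. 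Thus $x$ is determined by the single element $xu\in C$ together with its $d_0^-$ and $d_0^+$, i.e.\ by an arrow of $C$ together with its source and target.

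Next I would invoke the thinness hypothesis. By Definition~\ref{D3.1}, thinness of $x$ (with $n=1$) says $d_0^-(xw)=d_0^+(xw)=xw$ for all $w\in\nu\Delta_1$; in particular, taking $w=u$, we get $d_0^-(xu)=d_0^+(xu)=xu$. So the arrow $xu$ equals its own source and target, meaning $xu$ is an object (a $0$-dimensional identity) of $C$. I would then apply Proposition~\ref{P2.1} with $m=0$ to the sub-$\omega$-category generated by $xu$: since $d_0^-(xu)=d_0^+(xu)=xu$, every element of that sub-$\omega$-category is $0$-dimensional, and in any case $xu$ itself is a $0$-dimensional identity, say $xu=d_0^\alpha(xu)$.

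Finally I would identify $x$ with the degeneracy $\e_0 y$ where $y\colon\nu\Delta_0\to C$ is the $0$-dimensional element picking out the object $xu$. Concretely $\e_0 y=y(\nu\e_0^\v)$, and since $\e_0^\v\colon\Delta_1\to\Delta_0$ sends $[0]$ and $[1]$ to $[0]$ and $[0,1]$ to $0$, the element $\nu\e_0^\v$ sends $u$ to the $0$-dimensional element of $\nu\Delta_0$ corresponding to $[0]$; hence $(\e_0 y)u=y(\text{that object})=xu$ by construction, and similarly $(\e_0 y)$ agrees with $x$ on the two vertices. Since a morphism out of $\nu\Delta_1$ is determined by its values on these generators, $x=\e_0 y$, so $x$ is degenerate. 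The only mildly delicate point is the bookkeeping in the previous paragraph: one must check that ``$xu$ is a $0$-dimensional identity'' really does force $x$ to factor through $\nu\Delta_0$, which is where the explicit description of $\nu\Delta_1$ and of $\nu\e_0^\v$ from Notation~\ref{N2.3} and Section~\ref{S2} is needed; everything else is routine unwinding of definitions.
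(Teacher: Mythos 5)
Your proposal is correct and follows essentially the same route as the paper: identify the three elements of $\nu\Delta_1$, use thinness at $u$ to see that $x$ is constant with value the object $xu$, and then exhibit $x$ as $\e_0$ applied to the $0$-dimensional element picking out that object (the paper writes this as $x=x(\nu\d_0^\v)(\nu\e_0^\v)=\e_0\d_0 x$). The appeal to Proposition~\ref{P2.1} is unnecessary but harmless.
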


\begin{proof}
Recall that $x$~is a morphism of $\omega$-categories from $\nu\Delta_1$ to~$C$. It is straightforward to check that $\nu\Delta_1$ has exactly three members, which can be listed as
\begin{align*}
&u=(\,[0],[1]\mid [0,1],[0,1]\mid 0,0\mid\ldots\,),\\
&d_0^- u=(\,[0],[0]\mid 0,0\mid\ldots\,),\\
&d_0^+ u=(\,[1],[1]\mid 0,0\mid\ldots\,).
\end{align*}
We have $xd_0^- u=d_0^- xu=xu$ and $xd_0^+ u=d_0^+ xu=xu$ because $x$~is thin; therefore $x$~is constant. It follows that $x=x(\nu\d_0^\v)(\nu\e_0^\v)=\e_0\d_0 x$; therefore $x$~is degenerate.
\end{proof} 

\section{Horns in nerves} \label{S5}

Recall that the $n$-dimensional elements in the nerves of $\omega$-categories are represented by the free augmented directed complex~$\Delta_n$ via the functor~$\nu$. In this section we show that $(n-1)$-dimensional $k$-horns are similarly represented by a subcomplex~$\Lambda_n^k$ of~$\Delta_n$.

The subcomplex~$\Lambda_n^k$ is as one would expect.

\begin{notation} \label{N5.1}
If $n>0$ and if $0\leq k\leq n$ then $\Lambda_n^k$~is the free augmented directed subcomplex of~$\Delta_n$ given by
$$\Lambda_n^k
=\d_0^\v\Delta_{n-1}+\ldots+\d_{k-1}^\v\Delta_{n-1}
+\d_{k+1}^\v\Delta_{n-1}+\ldots+\d_n^\v\Delta_{n-1}.$$
The prescribed basis for~$\Lambda_n^k$ is the prescribed basis for~$\Delta_n$ with the elements
$$[0,1,\ldots,k-1,k+1,\ldots,n-1,n],\quad [0,1,\ldots,n-1,n]$$
omitted.
\end{notation}

For $n\geq 2$ and for $0\leq i<j\leq n$ we have
$$\d_i^\v\Delta_{n-1}\cap\d_j^\v\Delta_{n-1}
=\d_j^\v\d_i^\v\Delta_{n-2}
=\d_i^\v\d_{j-1}^\v\Delta_{n-2}.$$
In the category of abelian groups this gives a description of~$\Lambda_n^k$ as a colimit. To show that $\Lambda_n^k$~represents horns in nerves, we must show that $\nu\Lambda_n^k$ is a similar colimit in the category of $\omega$-categories. We will do this by using results from~\cite{B2}, which we now summarise.

Let $K$ be a free augmented directed complex. Given a chain~$c$ in~$K$, we write $\d^+ c$ and $\d^- c$ for the positive and negative parts of the boundary $\d c$; in other words, $\d^+ c$ and $\d^- c$ are the sums of basis elements without common terms such that
$$\d c=\d^+ c-\d^- c.$$
Given basis elements $a,a'$ of the same dimension~$q$, we will write $a<a'$ if there is a basis element~$b$ of dimension $q+r$ with $r>0$ such that $a$~is a term in $(\d^-)^r b$ and such that $a'$~is a term in $(\d^+)^r b$.

\begin{definition} \label{D5.2}
A free augmented directed complex is \emph{unital} if $\e(\d^-)^p a=\e(\d^+)^p a=1$ for every basis element~$a$, where $p$~is the dimension of~$a$. 

A free augmented directed complex is \emph{loop-free} if the transitive closure of the relation~$<$ is a strict partial order on its basis elements.
\end{definition}

\begin{definition} \label{D5.3}
Let $a$ be a $p$-dimensional basis element in a unital loop-free directed complex~$K$. The the associated \emph{atom} is the member~$\langle a\rangle$ of $\nu K$ given by
$$\langle a\rangle
=(\,(\d^-)^p a,(\d^+)^p a\mid\ldots\mid \d^- a,\d^+a\mid a,a\mid 0,0\mid\ldots\,).$$
\end{definition}

Note that $\d\d^-=\d\d^+$ because $\d\d=0$, hence $\d^\alpha\d^-=\d^\alpha\d^+$ for each sign~$\alpha$; it follows that in Definition~\ref{D5.3} $\langle a\rangle$~really is a member of $\nu K$.

The main result that we will use is the following (\cite{B2}, Theorem~6.1). 

\begin{theorem} \label{T5.4}
If $K$~is a unital loop-free free augmented directed complex then the $\omega$-category $\nu K$ has a presentation as follows. The generators are the atoms. For a basis element~$a$ of arbitrary dimension~$p$ there are relations
$$d_p^-\langle a\rangle=d_p^+\langle a\rangle=\langle a\rangle.$$ 
For a basis element~$a$ of positive dimension~$p$ there are relations
$$d_{p-1}^-\langle a\rangle =w^-(a),\quad
d_{p-1}^+\langle a\rangle =w^+(a),$$
where $w^-(a)$ and $w^+(a)$ are arbitrarily chosen expressions for $d_{p-1}^-\langle a\rangle$ and $d_{p-1}^+\langle a\rangle$ as iterated composites of atoms of dimension less than~$p$.
\end{theorem}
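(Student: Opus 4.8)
The statement to be proved is that $\nu K$ has the asserted presentation for every unital loop-free free augmented directed complex $K$; since this is quoted verbatim from~\cite{B2}, Theorem~6.1, the natural thing to do is to recall the structure of that proof and indicate how it goes. The plan is first to verify that the atoms generate $\nu K$, and then to verify that the relations are complete.

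\textbf{Step 1: the atoms generate.} I would start from the description of the members of $\nu K$ in Notation~\ref{N2.3}: an element is a sequence $(\,x_0^-,x_0^+\mid x_1^-,x_1^+\mid\ldots\,)$ in which each $x_q^\pm$ is a sum of $q$-dimensional basis elements satisfying the boundary and augmentation constraints. The key observation is that if $a$ is a basis element appearing in $x_q^+$, then by the boundary condition $\d x_q^+ = x_{q-1}^+ - x_{q-1}^-$ the lower terms of $\langle a\rangle$ are compatible with $x$ in the appropriate sense, so one can split off the atom $\langle a\rangle$ using the composition $\comp_{q-1}$ (or, near the top dimension, exhibit $x$ as a composite along the top-dimensional basis elements appearing in it). Loop-freeness is exactly what guarantees that this splitting process terminates: the relation $<$, being (after transitive closure) a strict partial order, lets one induct on the top-dimensional basis elements of $x$ ordered by $<$, peeling them off one at a time as $\comp_{p-1}$-factors, and reducing to elements with strictly fewer top-dimensional terms. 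Unitality is needed so that the augmentation condition $\e x_0^- = \e x_0^+ = 1$ is preserved under this decomposition and so that the atoms themselves are legitimate members of $\nu K$ with $\e(\d^-)^p a = \e(\d^+)^p a = 1$. This shows every element of $\nu K$ is an iterated composite of atoms, i.e.\ the atoms generate.

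\textbf{Step 2: the relations are complete.} Here I would let $C$ be the $\omega$-category presented by the stated generators and relations, note that by Step~1 there is a surjective functor $C \to \nu K$ sending each generator to the corresponding atom (the relations of the presentation clearly hold among the atoms of $\nu K$, by direct inspection of Definition~\ref{D5.3} and the formulas for $d_n^\pm$ in Notation~\ref{N2.3}), and then construct an inverse. To do this one writes down, for each member $x$ of $\nu K$, a canonical expression for $x$ as an iterated composite of atoms — the canonical form produced by the decomposition of Step~1 — and checks that two elements of $\nu K$ with the same canonical form are equal, and that the canonical form of a composite $x\comp_n y$ or an identity $d_n^\pm x$ can be derived from those of $x$ and $y$ using only the presentation's relations. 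This gives a well-defined functor $\nu K \to C$ splitting the surjection, hence an isomorphism. The freeness of the complex (prescribed bases, no subtraction in morphisms) is what makes the canonical forms rigid enough for this bookkeeping to close up.

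\textbf{Main obstacle.} The genuinely delicate point is Step~2: showing that the relations $d_p^\pm\langle a\rangle = \langle a\rangle$ and $d_{p-1}^\pm\langle a\rangle = w^\pm(a)$ suffice to prove \emph{every} identity holding in $\nu K$. The subtlety is that the expressions $w^\pm(a)$ are only "arbitrarily chosen", so one must show the presented category does not depend on these choices, and one must show that the interchange-type identities forced by the commuting category structures in $\nu K$ are all consequences of these very economical relations. The way through is to produce the canonical-form normalization explicitly and prove a confluence/uniqueness statement for it; this is where loop-freeness is used a second time, to order the rewriting, and where the bulk of the combinatorial work in~\cite{B2} lies. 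Since the present paper only \emph{uses} this theorem, I would at this point simply cite~\cite{B2}, Theorem~6.1, rather than reproduce the normalization argument in full.
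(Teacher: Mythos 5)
The paper gives no proof of Theorem~\ref{T5.4} at all: it simply cites \cite{B2}, Theorem~6.1, and your proposal ends by deferring to exactly that citation, so the two agree in approach. Your accompanying sketch (atoms generate via a decomposition ordered by loop-freeness, with unitality guaranteeing the augmentation condition, and the relations shown complete by a canonical-form argument) is a fair outline of the argument actually carried out in \cite{B2}.
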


We will now show that this theorem applies to simplexes, essentially by computing the atoms.

\begin{definition} \label{D5.5}
Let
$$a=[i_0,\ldots,i_p]$$ 
be a $p$-dimensional basis element in a simplex~$\Delta_n$. Then a \emph{block} in~$a$ is a non-empty sequence of consecutive terms
$$\mathbf{u}=i_r,i_{r+1},\ldots,i_s.$$
A block is \emph{even} if it has an even number of terms, it is \emph{odd} if it has an odd number of terms, it is \emph{initial} if it includes~$i_0$, and it is \emph{final} if it includes~$i_p$.
\end{definition}

\begin{proposition} \label{P5.6}
Let $a$ be a $p$-dimensional element in a simplex~$\Delta_n$ and let $q$ be an integer with $0\leq q\leq p$. Then $(\d^-)^{p-q}a$ is the sum of the $q$-dimensional basis elements with expressions
$$[\mathbf{u}_0,\ldots,\mathbf{u}_k]\quad (k\geq 0)$$
such that $\mathbf{u_0}$~is an odd initial block in~$a$ and such that $\mathbf{u}_1,\ldots,\mathbf{u}_k$ are even or final blocks in~$a$. In the same way $(\d^+)^{p-q}a$ is the sum of the $q$-dimensional basis elements with expressions
$$[\mathbf{u}_0,\ldots,\mathbf{u}_k]\quad (k\geq 0)$$
such that $\mathbf{u}_0,\ldots,\mathbf{u}_k$ are even or final blocks in~$a$.
\end{proposition}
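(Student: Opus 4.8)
The plan is to prove the two formulas for $(\d^-)^{p-q}a$ and $(\d^+)^{p-q}a$ by downward induction on $q$, starting from the trivial case $q=p$ (where $(\d^\pm)^0 a = a = [\mathbf{u}_0]$ with the single block $\mathbf{u}_0$ being the whole of $a$, which is odd iff $p+1$ is odd, i.e. never both cases apply simultaneously in a problematic way — more precisely when $q=p$ the ``initial block'' description must just be read as forcing $\mathbf{u}_0$ to be all of $a$). The inductive step applies a single boundary operator. So the heart of the argument is to understand how $\d^-$ and $\d^+$ act on a sum of ``block expressions'' $[\mathbf{u}_0,\ldots,\mathbf{u}_k]$ and to check that applying $\d^-$ (resp.\ $\d^+$) to the set described for parameter $q$ produces exactly the set described for parameter $q-1$, with no cancellation.

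First I would record the combinatorial bookkeeping. Given a $q$-dimensional basis element written as a sequence of blocks $[\mathbf{u}_0,\ldots,\mathbf{u}_k]$ of $a$, its alternating boundary deletes one term at a time; deleting a term from the interior of a block $\mathbf{u}_j$ splits $\mathbf{u}_j$ into two blocks (its head and its tail), while deleting an endpoint term of $\mathbf{u}_j$ either shortens $\mathbf{u}_j$ by one or, if $\mathbf{u}_j$ was a singleton, removes it entirely and merges nothing (the neighbours stay separate). The key observation is that the sign with which a given resulting block expression appears in $\d[\mathbf{u}_0,\ldots,\mathbf{u}_k]$ is governed by the parity of the position of the deleted term, and — crucially — that a block-expression appearing in the boundary of two different summands does so with opposite signs exactly when the two summands differ by moving a single term between an even/final block and an adjacent odd block, whereas same-sign coincidences simply do not occur. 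I would isolate this as the main lemma: the map ``forget which terms are block-breaks'' from the described $(q-1)$-level set to the boundaries of the described $q$-level set is a bijection onto the surviving terms, and all unwanted terms cancel in pairs.

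The cleanest way to organize this is to verify the statement for $\d^-$ and $\d^+$ together, noting the duality: swapping ``odd initial'' for ``even or final'' on the first block is exactly the symmetry one expects from $\d c = \d^+c - \d^-c$ together with the fact that an initial block of $a$ of length $\ell$ contributes a leading term with sign depending on $\ell$. Concretely, for the $\d^+$ formula one takes the described sum at level $q$ and applies $\d$; the positive part collects precisely those terms where the deleted term sat in an even position inside its block or was the removed endpoint of a final block, and these are checked to be exactly the level-$(q-1)$ block expressions of the $\d^+$ type; the negative part is discarded because those terms cancel against terms coming from other summands in the sum. The analogous computation, with parities reversed on the initial block, handles $\d^-$. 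I would also use the already-quoted fact $\d\d^- = \d\d^+$ to cross-check consistency but the real content is the sign analysis.

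The main obstacle I expect is the cancellation argument: one must show that every block expression that is \emph{not} of the target form (for instance, one whose initial block has the wrong parity, or one containing a non-initial odd non-final block) appears an even number of times with signs cancelling, and that every expression of the target form appears exactly once (with the correct, positive, coefficient) after cancellation. This requires a careful case analysis of ``how can two distinct level-$q$ summands, after one boundary step, yield the same level-$(q-1)$ block expression'', and pinning down the signs so that wanted terms reinforce (giving coefficient $+1$) and unwanted ones annihilate. I would handle this by setting up an explicit involution on the ``bad'' pairs — pairing a block expression together with a designated ``ambiguous'' break-point that can be slid left or right across one unit — and showing this involution is sign-reversing and fixed-point-free on the bad part, while the good part is pointwise fixed and receives coefficient exactly $+1$. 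Once this involution is in place the rest is routine verification of the boundary formula $\d[a_0,\ldots,a_q]=\sum(-1)^i[\ldots\widehat{a_i}\ldots]$ against the block combinatorics.
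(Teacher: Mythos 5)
Your overall strategy --- downward induction on $q$, with all the content in an analysis of how $\d$ acts on the described sums --- is exactly the paper's (its entire proof is the sentence ``this is a straightforward downward induction on~$q$''), so everything rests on your description of the inductive step, and that description contains a genuine error. You assert that ``the negative part is discarded because those terms cancel against terms coming from other summands'' and, equivalently, that ``every block expression that is not of the target form appears an even number of times with signs cancelling.'' This is false. Take $a=[0,1,2]$ and the $\d^+$ formula: the level-$1$ sum is $[0,1]+[1,2]$, and $\d([0,1]+[1,2])=[2]-[0]$. The term $-[0]$ cancels against nothing; it survives as the negative part. Moreover the individual term $+[1]$, obtained by deleting the entry $0$ (an even position in its block) from $[0,1]$ and therefore placed in the positive part by your rule, is in fact cancelled by the $-[1]$ coming from $[1,2]$. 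So neither half of your dichotomy (``positive part $=$ deletions at even positions or final endpoints; negative terms all cancel'') holds. What the inductive step must actually establish is the single identity
$$\d\Bigl(\textstyle\sum_{b\in S_{q}^{\alpha}}b\Bigr)=\sum_{b\in S_{q-1}^{+}}b-\sum_{b\in S_{q-1}^{-}}b\qquad(\alpha=+,-),$$
where $S_q^{+}$ and $S_q^{-}$ denote the two sets of $q$-dimensional basis elements described in the statement, together with the observation that $S_{q-1}^{+}\cap S_{q-1}^{-}=\emptyset$ when $q-1<p$ (an odd, initial, non-final first block is neither even nor final); only then does the definition of $\d^{+}$ and $\d^{-}$ as the positive and negative parts of the fully cancelled boundary let you read off both formulas at level $q-1$. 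Your sign-reversing involution is the right tool for the basis elements lying outside $S_{q-1}^{+}\cup S_{q-1}^{-}$, but it cannot be fixed-point-free on everything outside the target set: the elements of $S_{q-1}^{-}$ must instead be shown to survive with net coefficient $-1$.

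A second, smaller, problem is your reading of the base case. If the expression at $q=p$ ``must just be read as forcing $\mathbf{u}_0$ to be all of~$a$,'' then for $p$ odd the element $a$ has an even number of entries, the condition ``$\mathbf{u}_0$ is an odd initial block'' fails, and your reading makes the described sum for $(\d^{-})^{0}a$ empty rather than equal to~$a$. The statement is about the existence of \emph{some} expression of a basis element as a concatenation of blocks of the stated kinds, not about its decomposition into maximal blocks: $a$ itself always admits the expression with $\mathbf{u}_0=i_0$ (odd, initial) followed by the final block $i_1,\ldots,i_p$. For $q<p$ the two readings happen to agree, since a non-final block can be cut into even-or-final pieces only if it is itself even, but the bookkeeping in the inductive step is only coherent if you fix the correct reading at the outset.
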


\begin{proof}
This is a straightforward downward induction on~$q$.
\end{proof}

\begin{proposition} \label{P5.7}
The free augmented directed complexes $\Delta_n$ and $\Lambda_n^k$ are unital and loop-free.
\end{proposition}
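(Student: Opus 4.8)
The plan is to dispose of unitality quickly using Proposition~\ref{P5.6}, and then to prove loop-freeness of~$\Delta_n$ by exhibiting, in each dimension, a strict total order on the basis elements that refines the relation~$<$; both properties for~$\Lambda_n^k$ will then follow formally from the fact that it is a subcomplex.

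For unitality, let $a=[a_0,\ldots,a_p]$ be a $p$-dimensional basis element of~$\Delta_n$. Applying Proposition~\ref{P5.6} with $q=0$ forces the block decomposition of a $0$-dimensional face to consist of a single one-term block: that block must be odd and initial in the case of $(\d^-)^p a$ (hence equal to $\{a_0\}$) and even or final in the case of $(\d^+)^p a$ (hence equal to $\{a_p\}$). Therefore $(\d^-)^p a=[a_0]$ and $(\d^+)^p a=[a_p]$, so $\e(\d^-)^p a=\e(\d^+)^p a=1$ and $\Delta_n$ is unital. The basis of~$\Lambda_n^k$ is obtained from that of~$\Delta_n$ by deleting two elements, of dimensions $n-1$ and~$n$; in particular every $0$-dimensional basis element survives, so the same computation of iterated extremal boundaries applies verbatim and $\Lambda_n^k$ is unital.

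For loop-freeness of~$\Delta_n$, note first that $<$ relates only basis elements of equal dimension, so it suffices to show that, for each~$q$, the relation~$<$ on $q$-dimensional basis elements is contained in a strict partial order. I would use the order obtained by comparing the vectors
$$\phi([a_0,a_1,a_2,\ldots,a_q])=(a_0,-a_1,a_2,\ldots,(-1)^q a_q)$$
lexicographically, and claim that $a<a'$ implies $\phi(a)<\phi(a')$. Granting the claim, the transitive closure of~$<$ is contained in this strict total order, hence is itself a strict partial order, so $\Delta_n$ is loop-free; and since the basis of~$\Lambda_n^k$ is a subset of that of~$\Delta_n$ and the boundary maps—hence the parts $\d^+,\d^-$—agree, the relation~$<$ for~$\Lambda_n^k$ is a sub-relation of that for~$\Delta_n$, so its transitive closure is again a strict partial order and $\Lambda_n^k$ is loop-free as well.

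It remains to prove the claim, and this is where the real work lies. Suppose $a<a'$ is witnessed by a basis element $b=[b_0,\ldots,b_m]$ with $m=q+r$, $r>0$, such that $a$ is a term in $(\d^-)^r b$ and $a'$ is a term in $(\d^+)^r b$. By Proposition~\ref{P5.6}, the vertex sets of $a$ and~$a'$ decompose into blocks of consecutive vertices of~$b$: the first block of~$a$ is initial and of odd length, say $2s_0+1$, while every block of~$a'$ is even or final; moreover a block that is both initial and final would be all of~$b$ and would force $a'=b$, so the first block of~$a'$, if it is initial, has even length, say $2t_0$. If $b_0$ is not a vertex of~$a'$, then $a_0=b_0<a'_0$ and $\phi(a)<\phi(a')$ immediately. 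Otherwise $a_0=a'_0=b_0$; comparing the lengths $2s_0+1$ and $2t_0$ of the two first blocks (which have opposite parity, hence are unequal), one checks that $a$ and~$a'$ agree in all coordinates up to where the shorter first block ends and first disagree at position $2s_0+1$ when $s_0<t_0$, with $a$ the larger there, and at position $2t_0$ when $t_0\le s_0$, with $a'$ the larger there; in both cases the alternating sign in~$\phi$ at that position gives $\phi(a)<\phi(a')$. The main obstacle is precisely this last bookkeeping: translating the block conditions of Proposition~\ref{P5.6} into a statement about where the increasing sequences $a$ and~$a'$ first diverge, and verifying that the parity of that position is the one compatible with $\phi(a)<\phi(a')$.
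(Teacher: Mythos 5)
Your proof is correct and follows essentially the same route as the paper: unitality via Proposition~\ref{P5.6} at $q=0$, and loop-freeness by embedding the relation~$<$ in the alternating-sign lexicographic order $[a_0,\ldots,a_q]\mapsto(a_0,-a_1,\ldots,(-1)^q a_q)$, with $\Lambda_n^k$ inherited as a subcomplex. The block-parity bookkeeping you carry out at the end is precisely the step the paper asserts without proof, and your case analysis (first block of the $\d^-$-face odd initial, first block of the $\d^+$-face even if initial, divergence at position $2s_0+1$ or $2t_0$ accordingly) checks out.
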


\begin{proof}
We make the following deductions from Proposition~\ref{P5.6}.

If $a=[i_0,\ldots,i_p]$ is a $p$-dimensional basis element, then 
$$\e(\d^-)^p a=\e[i_0]=1,\quad \e(\d^+)^p a=\e[i_p]=1;$$
therefore the complexes are unital.

If $a$~and~$a'$ are basis elements such that $a<a'$, say 
$$a=[i_0,\ldots,i_p],\quad a'=[i_0',\ldots,i_p']$$
and if $i_0=i_0'$, \dots, $i_{q-1}=i'_{q-1}$, $i_q\neq i'_q$, then $(-1)^q i_q<(-1)^q i'_q$. The transitive closure of~$<$ is therefore contained in a total ordering of the basis elements and it follows that the complexes are loop-free.
\end{proof}

For $n\geq 2$ it now follows from Theorem~\ref{T5.4} that $\nu\Lambda_n^k$ is the obvious colimit of copies of $\nu\Delta_{n-1}$ and $\nu\Delta_{n-2}$. If $n=1$ then $\nu\Lambda_n^k$~is obviously just a copy of $\nu\Delta_{n-1}$. We deduce the following result.

\begin{theorem} \label{T5.8}
Let $n$~and~$k$ be integers with $n>0$ and $0\leq k\leq n$, and let $C$ be an $\omega$-category. Then the function
$$\Hom(\nu\Lambda_n^k,C)
\to\Hom(\nu\Delta_{n-1},C)\times\ldots\times\Hom(\nu\Delta_{n-1},C)$$
induced by $(\,\nu\d_i^\v:i\neq k\,)$ is a bijection whose image consists of the $(n-1)$-dimensional $k$-horns in the nerve of~$C$. The function
$$\Hom(\nu\Delta_n,C)\to\Hom(\nu\Lambda_n^k,C)$$
induced by the inclusion $\nu\Lambda_n^k\to\nu\Delta_n$ sends $n$-dimensional elements to the $k$-horns which they fill.
\end{theorem}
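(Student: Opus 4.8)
The plan is to deduce both assertions of Theorem~\ref{T5.8} from the colimit description of $\nu\Lambda_n^k$ that was just established via Theorem~\ref{T5.4} and Proposition~\ref{P5.7}. First I would treat the case $n=1$ separately, since then $\Lambda_n^k$ is a single copy of $\Delta_0$ and both claims are trivial: a $0$-dimensional $k$-horn is just a single $0$-dimensional element, and the inclusion $\nu\Lambda_1^k\to\nu\Delta_1$ picks out the appropriate face of a $1$-dimensional element. For $n\geq 2$ the real content is to identify $\nu\Lambda_n^k$ with the colimit, in the category of $\omega$-categories, of the diagram of copies of $\nu\Delta_{n-1}$ indexed by $i\neq k$ glued along copies of $\nu\Delta_{n-2}$ as dictated by the simplicial identities $\d_j^\v\d_i^\v=\d_i^\v\d_{j-1}^\v$ for $i<j$. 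Given that identification, applying the contravariant functor $\Hom(-,C)$ turns the colimit into the corresponding limit, and a limit of this shape is precisely the set of tuples $(z_i)_{i\neq k}$ of $(n-1)$-dimensional elements satisfying the horn compatibility conditions $\d_i z_j=\d_{j-1}z_i$ for $i<j$ — that is, the set of $(n-1)$-dimensional $k$-horns in $NC$. The first function in the statement is then exactly the canonical comparison map to this limit, so it is a bijection onto the horns as claimed.

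To carry out the colimit identification I would use Theorem~\ref{T5.4} as follows. On the abelian-group level, $\Lambda_n^k=\sum_{i\neq k}\d_i^\v\Delta_{n-1}$ with pairwise intersections $\d_i^\v\Delta_{n-1}\cap\d_j^\v\Delta_{n-1}=\d_j^\v\d_i^\v\Delta_{n-2}$ (for $i<j$), as already noted in Section~\ref{S5}; equivalently the prescribed basis of $\Lambda_n^k$ is the disjoint-up-to-these-overlaps union of the bases of the images $\d_i^\v\Delta_{n-1}$. Each $\Delta_{n-1}$ and $\Delta_{n-2}$ is unital and loop-free by Proposition~\ref{P5.7}, so Theorem~\ref{T5.4} gives a presentation of $\nu\Delta_{n-1}$ and $\nu\Delta_{n-2}$ by atoms with the stated relations. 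I would assemble from these a presentation of the would-be colimit $\omega$-category: generators are the atoms $\langle a\rangle$ for basis elements $a$ of $\Lambda_n^k$, with the dimension relations $d_p^\pm\langle a\rangle=\langle a\rangle$ and the boundary relations $d_{p-1}^\pm\langle a\rangle=w^\pm(a)$, where the words $w^\pm$ are inherited from whichever face copy contains $a$ (the simplicial identities guarantee these choices agree on the overlap $\Delta_{n-2}$'s, so the presentation is consistent). But $\Lambda_n^k$ is itself unital and loop-free by Proposition~\ref{P5.7}, so Theorem~\ref{T5.4} applied directly to $\Lambda_n^k$ yields \emph{exactly} this same presentation of $\nu\Lambda_n^k$. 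Hence $\nu\Lambda_n^k$ is the colimit, and the maps $\nu\d_i^\v$ are the colimit cocone legs.

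For the second assertion, an $n$-dimensional element of $NC$ is a morphism $x\colon\nu\Delta_n\to C$, and restricting along the inclusion $\nu\Lambda_n^k\hookrightarrow\nu\Delta_n$ gives a morphism $\nu\Lambda_n^k\to C$, i.e.\ a $k$-horn, whose $i$-th component ($i\neq k$) is $x\circ\nu\d_i^\v=\d_i x$; this is visibly the $k$-horn that $x$ fills, so nothing further is needed once the first part is in place.

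The main obstacle I expect is the verification, in the colimit identification, that the presentation one extracts from the face copies genuinely \emph{coincides} with the presentation of $\nu\Lambda_n^k$ coming from Theorem~\ref{T5.4} — in particular that the basis of $\Lambda_n^k$ is correctly accounted for (each basis element of $\Lambda_n^k$ lies in exactly the faces $\d_i^\v\Delta_{n-1}$ whose index $i$ is absent from it, and in the pairwise intersections correspondingly), and that the order-theoretic/unital hypotheses needed for Theorem~\ref{T5.4} are preserved on passing to the subcomplex (this is Proposition~\ref{P5.7}, already proved). This is essentially a bookkeeping matter about which basis elements of $\Delta_n$ survive in $\Lambda_n^k$ and how the atoms and their boundary words restrict, rather than a conceptual difficulty; once it is checked, the two bijections drop out formally from the adjunction between colimits and the representable functor $\Hom(-,C)$.
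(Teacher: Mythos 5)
Your proposal is correct and follows essentially the same route as the paper: the paper likewise deduces the theorem from the chain-level decomposition $\Lambda_n^k=\sum_{i\neq k}\d_i^\v\Delta_{n-1}$ with intersections $\d_j^\v\d_i^\v\Delta_{n-2}$, uses Proposition~\ref{P5.7} together with the atom presentations of Theorem~\ref{T5.4} to identify $\nu\Lambda_n^k$ as the corresponding colimit of copies of $\nu\Delta_{n-1}$ and $\nu\Delta_{n-2}$ (treating $n=1$ separately), and then applies $\Hom(-,C)$ to convert this into the limit description of $k$-horns. Your added care about matching the assembled presentation with the one Theorem~\ref{T5.4} gives directly for $\Lambda_n^k$ is exactly the bookkeeping the paper leaves implicit.
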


\section{Horns represented by pairs of faces} \label{S6}

Let $n$~and~$k$ be integers with $0<k<n$. We have constructed a subcomplex~$\Lambda_n^k$ in~$\Delta_n$ which represents $(n-1)$-dimensional $k$-horns (Theorem~\ref{T5.8}). We will now construct a subcomplex~$V_n^k$ of~$\Lambda_n^k$ and a retraction
$$\Pi_n^k\colon\Delta_n\to V_n^k.$$
Eventually we will show that $V_n^k$~represents $n$-dimensional $k$-complicial elements, and that $\Pi_n^k$ represents the inclusion of these elements in the set of all $n$-dimensional elements. We will also show that $V_n^k$~represents $(n-1)$-dimensional $k$-complicial horns and that the restriction
$$\pi_n^k=\Pi_n^k|\Lambda_n^k\colon \Lambda_n^k\to V_n^k$$
represents the inclusion of these horns in the set of all $(n-1)$-dimen\-sion\-al $k$-horns.

We will now define the subcomplex~$V_n^k$ and the retraction~$\Pi_n^k$.

\begin{notation} \label{N6.1}
If $n>0$ and $0<k<n$ then $V_n^k$~is the free augmented directed subcomplex of~$\Delta_n$ given by
$$V_n^k=\d_{k-1}^\v\Delta_{n-1}+\d_{k+1}^\v\Delta_{n-1}.$$
The prescribed basis elements for~$V_n^k$ are the prescribed basis elements $[i_0,\ldots,i_q]$ for~$\Delta_n$ not including both $k-1$ and $k+1$.
\end{notation}

The results for $\Delta_n$~and~$\Lambda_n^k$ given in Proposition~\ref{P5.7} obviously apply to~$\Lambda_n^k$ as well.

\begin{proposition} \label{P6.2}
The free augmented directed complex~$V_n^k$ is unital and loop-free.
\end{proposition}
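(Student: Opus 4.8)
The plan is to deduce this from Proposition~\ref{P5.7} by observing that $V_n^k$ is a full subcomplex of $\Delta_n$ on a subset of the basis elements, so that both the unitality condition and the loop-free condition are inherited. First I would check that $V_n^k$ really is closed under boundaries, so that it makes sense as a subcomplex: since its prescribed basis consists of those $[i_0,\ldots,i_q]$ not containing both $k-1$ and $k+1$, and since every face of such a tuple is again a subtuple omitting at least one of $k-1,k+1$, the boundary of a basis element of $V_n^k$ is a $\mathbf{Z}$-linear combination of basis elements of $V_n^k$. (This is of course already implicit in the statement of Notation~\ref{N6.1}, but it is worth recording.) It also follows that for a basis element $a$ of $V_n^k$ the chains $\d^- a$ and $\d^+ a$ computed in $V_n^k$ agree with those computed in $\Delta_n$, and hence inductively $(\d^\pm)^r a$ is the same whether computed in $V_n^k$ or in $\Delta_n$.

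Granting that, unitality of $V_n^k$ is immediate: for a basis element $a=[i_0,\ldots,i_p]$ of $V_n^k$ we have, exactly as in the proof of Proposition~\ref{P5.7}, $\e(\d^-)^p a=\e[i_0]=1$ and $\e(\d^+)^p a=\e[i_p]=1$, the computation being carried out in $\Delta_n$ where Proposition~\ref{P5.6} applies. Loop-freeness is equally easy: the relation $<$ on the basis of $V_n^k$ is the restriction of the relation $<$ on the basis of $\Delta_n$ (again because the iterated boundaries agree), so its transitive closure is contained in the transitive closure of $<$ on $\Delta_n$, which by Proposition~\ref{P5.7} is a strict partial order (indeed is contained in a total order, via the sign-twisted lexicographic comparison $(-1)^q i_q < (-1)^q i'_q$ recalled in that proof). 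A restriction of a strict partial order to a subset is again a strict partial order, so $V_n^k$ is loop-free.

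I do not expect any real obstacle here; the only point requiring a moment's care is the claim that $\d^\pm$ and their iterates are computed the same way inside the subcomplex as inside $\Delta_n$, i.e. that no cancellation between a term in $V_n^k$ and a term outside $V_n^k$ can occur. That follows because the omitted basis elements form a subset closed under passing to cofaces within $\Delta_n$ in the relevant sense — equivalently, $V_n^k$ is spanned by a down-closed (under the face relation) set of basis elements — so the boundary of a chain supported on $V_n^k$ is again supported on $V_n^k$ with the same coefficients, and the positive/negative decomposition is unaffected. Once this is noted, the proof is a one-line appeal to Proposition~\ref{P5.7}.

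\begin{proof}
Every face of a basis element $[i_0,\ldots,i_q]$ of $V_n^k$ is a subtuple, hence again omits $k-1$ or $k+1$; so $V_n^k$ is a subcomplex, and for a basis element $a$ of $V_n^k$ the chains $\d^- a$, $\d^+ a$, and their iterates $(\d^-)^r a$, $(\d^+)^r a$ are the same whether computed in $V_n^k$ or in $\Delta_n$. Unitality now follows as in Proposition~\ref{P5.7}: for a $p$-dimensional basis element $a=[i_0,\ldots,i_p]$ we have $\e(\d^-)^p a=\e[i_0]=1$ and $\e(\d^+)^p a=\e[i_p]=1$. Likewise the relation $<$ on the basis of $V_n^k$ is the restriction of the relation $<$ on the basis of $\Delta_n$, so its transitive closure is a restriction of the transitive closure of $<$ on $\Delta_n$; by Proposition~\ref{P5.7} the latter is a strict partial order, and hence so is the former. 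Thus $V_n^k$ is unital and loop-free.
\end{proof}
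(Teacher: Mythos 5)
Your proof is correct and takes the same route as the paper, which simply observes that the computations of Proposition~\ref{P5.7} apply verbatim to the subcomplex $V_n^k$; you have merely made explicit the (easy) point that faces of basis elements of $V_n^k$ stay in $V_n^k$, so that the iterated boundaries and the relation $<$ are inherited from $\Delta_n$.
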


\begin{notation} \label{N6.3}
Let $n$~and~$k$ be integers with $0<k<n$. Then 
$$\Pi_n^k\colon\Delta_n\to V_n^k$$
is the morphism given on basis elements $a=[i_0,\ldots,i_q]$ as follows. If $a$~does not include $k-1$ or does not include $k+1$ then $\Pi_n^k a=a$. If $a$~includes $k-1$, $k$ and $k+1$ then $\Pi_n^k a=0$. If $a$ includes $k-1$ and $k+1$ but not~$k$, say
$$a=[\mathbf{u},k-1,k+1,\mathbf{v}],$$
then
$$\Pi_n^k a=[\mathbf{u},k,k+1,\mathbf{v}]+[\mathbf{u},k-1,k,\mathbf{v}].$$
We also write~$\pi_n^k$ for the restriction
$$\pi_n^k=\Pi_n^k|\Lambda_n^k\colon\Lambda_n^k\to V_n^k.$$
\end{notation}

It is straightforward to show $\Pi_n^k$~and~$\pi_n^k$ are augmentation-preserving chain maps.  Since they take basis elements to sums of basis elements, they are morphisms of free augmented directed complexes. It is also straightforward to check that they have the following properties.

\begin{proposition} \label{P6.4}
The morphisms of free augmented directed complexes $\Pi_n^k$~and~$\pi_n^k$ are retractions such that
\begin{align*}
&\pi_n^k\d_i^\v=\d_i^\v\Pi_{n-1}^{k-1}\quad (0\leq i<k-1),\\
&\pi_n^k\d_{k-1}^\v=\d_{k-1}^\v,\\
&\pi_n^k\d_{k+1}^\v=\d_{k+1}^\v,\\
&\pi_n^k\d_i^\v=\d_i^\v\Pi_{n-1}^k\quad (k+1<i\leq n).
\end{align*}
\end{proposition}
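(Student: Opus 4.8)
The plan is to verify each assertion by a direct computation on prescribed basis elements, using the explicit description of $\Pi_n^k$ from Notation~\ref{N6.3} together with the formula for $\d_i^\v$ on basis elements from Section~\ref{S2}. There are essentially two things to check: that $\Pi_n^k$ and $\pi_n^k$ are retractions, and that the four displayed commutation relations hold after restriction to $\Lambda_n^k$.

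First I would dispose of the retraction claim. Since $V_n^k$ is by definition the subcomplex of $\Delta_n$ spanned by the basis elements not containing both $k-1$ and $k+1$, it suffices to observe that $\Pi_n^k$ fixes each such basis element (this is the first clause of Notation~\ref{N6.3}), so $\Pi_n^k$ followed by the inclusion $V_n^k\to\Delta_n$ restricts to the identity on $V_n^k$; the same holds for $\pi_n^k$ since $\pi_n^k$ is just $\Pi_n^k$ restricted to $\Lambda_n^k$, and $V_n^k\subseteq\Lambda_n^k$ (as $0<k<n$, neither omitted basis element of $\Lambda_n^k$ lies in $V_n^k$). That $\Pi_n^k$ lands in $V_n^k$ is immediate from the three cases: the output never contains both $k-1$ and $k+1$.

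Next I would verify the four relations. Both sides of each are morphisms $\Delta_{n-1}\to V_n^k$ (after using Proposition~\ref{P6.2} to know $V_n^k$ is well-behaved), so it is enough to evaluate on a basis element $a=[i_0,\dots,i_q]$ of $\Delta_{n-1}$ and chase the indices. For $0\le i<k-1$: applying $\d_i^\v$ inserts a gap at position $i$, shifting the entries $\ge i$ up by one; the entries near $k-1,k,k+1$ of $\d_i^\v a$ correspond to entries near $k-2,k-1,k$ of $a$, which is exactly why $\Pi_{n-1}^{k-1}$ on the right matches $\pi_n^k\d_i^\v$ on the left — in every one of the three cases of Notation~\ref{N6.3} the relabelling $k-1\mapsto k-2$, $k\mapsto k-1$, $k+1\mapsto k$ intertwines the two morphisms, and one checks the subtracted ``both $k-1$ and $k+1$ present'' case transforms into ``both $k-2$ and $k$ present''. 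For $i>k+1$ the entries near $k-1,k,k+1$ are undisturbed by $\d_i^\v$, giving $\pi_n^k\d_i^\v=\d_i^\v\Pi_{n-1}^k$ directly. For $i=k-1$ and $i=k+1$: inserting a gap at $k-1$ (resp.\ $k+1$) produces a basis element of $\Delta_n$ that cannot contain both $k-1$ and $k+1$ — the gap lies exactly between them or exactly at one of them — so $\Pi_n^k$ fixes it, i.e.\ $\pi_n^k\d_{k-1}^\v=\d_{k-1}^\v$ and $\pi_n^k\d_{k+1}^\v=\d_{k+1}^\v$. One should also note that the images of all these $\d_i^\v$ lie in $\Lambda_n^k$, so that $\pi_n^k$ (rather than $\Pi_n^k$) is the operative morphism; this holds because $\d_i^\v\Delta_{n-1}\subseteq\Lambda_n^k$ for every $i\ne k$.

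The main obstacle is purely bookkeeping: keeping the index shifts straight in the $0\le i<k-1$ case, where one must confirm that the ``split'' formula $[\mathbf u,k-1,k+1,\mathbf v]\mapsto[\mathbf u,k,k+1,\mathbf v]+[\mathbf u,k-1,k,\mathbf v]$ commutes with $\d_i^\v$ after the downward relabelling of the three critical indices. Since $\d_i^\v$ acts on the $\mathbf u$-part and $\mathbf v$-part independently and simply renames $k-1,k,k+1$ as $k-2,k-1,k$, there is nothing subtle beyond careful case analysis; the verification is routine and I would present it compactly by treating the generic basis element and checking each of the three cases of $\Pi$ in turn.
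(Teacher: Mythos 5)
Your proof is correct and is exactly the ``straightforward check'' the paper leaves to the reader: a case analysis on prescribed basis elements, using that $\d_i^\v$ for $i<k-1$ relabels $k-2,k-1,k$ as $k-1,k,k+1$, that $\d_{k\pm1}^\v$ produces basis elements omitting $k\mp1$... rather, omitting $k\pm1$ and hence fixed by $\Pi_n^k$, and that $\d_i^\v$ for $i>k+1$ leaves the critical indices untouched. The retraction argument and the observation that the outputs of $\Pi_n^k$ never contain both $k-1$ and $k+1$ are also as intended; no gaps.
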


We immediately obtain an analogue for Definition \ref{D4.1}(1).

\begin{proposition} \label{P6.5}
Let $C$ be an $\omega$-category, let $z$ be a morphism in $\Hom(\nu\Lambda_n^k,C)$, and let
$$z_i=z(\nu\d_i^\v)\quad (0\leq i\leq n,\ i\neq k).$$
Then $z$~is of the form $y(\nu\pi_n^k)$ if and only if $z_i$ is of the form $y_i(\nu\Pi_{n-1}^{k-1})$ for $0\leq i<k-1$ and of the form $y_i(\nu\Pi_{n-1}^k)$ for $k+1<i\leq n$.
\end{proposition}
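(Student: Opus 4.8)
The plan is to prove the two implications of the biconditional separately. The forward implication is a formal computation; the converse reduces, via the colimit description of $\nu\Lambda_n^k$ supplied by Theorem~\ref{T5.8}, to checking a single identity one face at a time.

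For the forward implication, suppose $z=y(\nu\pi_n^k)$ with $y\in\Hom(\nu V_n^k,C)$, and put $y_i=y(\nu\d_i^\v)$. Then
$$z_i=z(\nu\d_i^\v)=y(\nu\pi_n^k)(\nu\d_i^\v)=y\bigl(\nu(\pi_n^k\d_i^\v)\bigr),$$
so substituting the formulas for $\pi_n^k\d_i^\v$ from Proposition~\ref{P6.4} and using functoriality of $\nu$ yields $z_i=y_i(\nu\Pi_{n-1}^{k-1})$ for $0\leq i<k-1$ and $z_i=y_i(\nu\Pi_{n-1}^k)$ for $k+1<i\leq n$, which is the required condition. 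There is no constraint on $z_{k-1}$ or $z_{k+1}$, matching the relations $\pi_n^k\d_{k-1}^\v=\d_{k-1}^\v$ and $\pi_n^k\d_{k+1}^\v=\d_{k+1}^\v$.

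For the converse, write $j\colon V_n^k\to\Lambda_n^k$ for the inclusion, so that $\pi_n^k j=\id$ since $\pi_n^k$ is a retraction. The first step is to observe that $z$ is of the form $y(\nu\pi_n^k)$ if and only if $z\bigl(\nu(j\pi_n^k)\bigr)=z$: if $z=y(\nu\pi_n^k)$ then $z(\nu(j\pi_n^k))=y\bigl(\nu(\pi_n^k j\pi_n^k)\bigr)=y(\nu\pi_n^k)=z$, while conversely the equation $z(\nu(j\pi_n^k))=z$ exhibits $z=y(\nu\pi_n^k)$ with $y=z(\nu j)$. So everything comes down to proving the identity $z(\nu(j\pi_n^k))=z$ between morphisms out of $\nu\Lambda_n^k$, under the hypotheses on the $z_i$. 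The second step is to invoke Theorem~\ref{T5.8}: it identifies $\nu\Lambda_n^k$ with the evident colimit of copies of $\nu\Delta_{n-1}$, so two morphisms out of $\nu\Lambda_n^k$ agree once their composites with $\nu\d_i^\v$ agree for all $i\neq k$. Hence it suffices to show $z\bigl(\nu(j\pi_n^k\d_i^\v)\bigr)=z_i$ for each $i\neq k$.

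This final verification is a short case analysis using Proposition~\ref{P6.4}. For $i=k-1$ and $i=k+1$ one has $\pi_n^k\d_i^\v=\d_i^\v$, and since this face already factors through $V_n^k$ we get $j\pi_n^k\d_i^\v=\d_i^\v$, so the identity is trivial — consistent with the fact that the hypothesis constrains none of these faces. For $0\leq i<k-1$, Proposition~\ref{P6.4} gives $\pi_n^k\d_i^\v=\d_i^\v\Pi_{n-1}^{k-1}$, where here $\d_i^\v$ denotes the face operator regarded as a map $V_{n-1}^{k-1}\to V_n^k$; composing with $j$ and writing $j'\colon V_{n-1}^{k-1}\to\Delta_{n-1}$ for the inclusion gives $j\pi_n^k\d_i^\v=\d_i^\v j'\Pi_{n-1}^{k-1}$, so
$$z\bigl(\nu(j\pi_n^k\d_i^\v)\bigr)=z_i(\nu j')(\nu\Pi_{n-1}^{k-1}).$$
Substituting the hypothesis $z_i=y_i(\nu\Pi_{n-1}^{k-1})$ and using $\Pi_{n-1}^{k-1}j'=\id$ collapses the right-hand side back to $z_i$; the case $k+1<i\leq n$ is identical with $\Pi_{n-1}^k$ in place of $\Pi_{n-1}^{k-1}$. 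The one genuinely substantive step here is the reduction to this face-by-face check — that is where the colimit structure of $\nu\Lambda_n^k$ and the hypotheses on the individual $z_i$ both come into play; the surrounding arguments are routine manipulations of the retraction identities and of the compatibilities between the face operators and the inclusions of the $V$-complexes, the latter being already built into Proposition~\ref{P6.4}.
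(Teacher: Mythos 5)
Your argument is correct and is essentially the careful spelling-out of what the paper declares "immediate" from Proposition~\ref{P6.4}: the forward direction from the commutation relations $\pi_n^k\d_i^\v=\d_i^\v\Pi_{n-1}^{k-1}$ (resp.\ $\d_i^\v\Pi_{n-1}^k$), and the converse by reducing, via the injectivity in Theorem~\ref{T5.8}, to the face-by-face identity $z(\nu(j\pi_n^k\d_i^\v))=z_i$ and then using the retraction identities. No corrections are needed.
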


This shows that composites with morphisms of the forms $\nu\pi_n^k$ and $\nu\Pi_{n-1}^k$ are somewhat analogous to $(n-1)$-dimensional complicial horns and $(n-1)$-dimensional complicial elements (see Definition~\ref{D4.1}(1)). To complete the analogy we must relate composites with $\nu\Pi_n^k$ to thin fillers of composites with $\nu\pi_n^k$. We will do this in the next section.

\section{Fillers for horns represented by pairs of faces} \label{S7}

In this section let $n$~and~$k$ be fixed integers with $0<k<n$. We have seen in Theorem~\ref{T5.8} that the process of filling $(n-1)$-dimensional $k$-horns in the nerves of $\omega$-categories is represented by the inclusion of~$\Lambda_n^k$ in~$\Delta_n$. In Section~\ref{S6} we have constructed a retraction~$\pi_n^k$ from~$\Lambda_n^k$ to~$V_n^k$ which is intended to represent the inclusion of the set of $(n-1)$-dimensional $k$-complicial horns in the set of arbitrary $(n-1)$-dimen\-sion\-al $k$-horns. We will now construct an extension~$W$ of~$V_n^k$ and a morphism $\Pi\colon\Delta_n\to W$ extending~$\pi_n^k$ such that the square
$$\xymatrix{
\nu\Lambda_n^k \ar_{\nu\pi_n^k}[d] \ar^{\subset}[r] & \nu\Delta_n \ar^{\nu\Pi}[d] \\
\nu V_n^k \ar^{\subset}[r] & \nu W
}$$
is a push-out square. The fillers for horns represented by~$V_n^k$ will then be represented by~$W$. We will also show that any such horn has a unique thin filler.

We will use the following notation.

\begin{notation} \label{N7.1}
In~$\Delta_n$ let $s,t_0,\ldots,t_n$ be the basis elements of dimensions $n$~and $n-1$ given by
\begin{align*}
&s=[0,1,\ldots,n],\\
&t_i=[0,\ldots,i-1,i+1,\ldots,n]\quad (0\leq i\leq n).
\end{align*}

Let $\kappa$ be the sign given by
$$\kappa=(-)^k.$$
\end{notation}

Note that $\Delta_n$~is obtained from~$\Lambda_n^k$ by adjoining the basis elements $t_k$~and~$s$. Note also that
$$\d s=\sum_{i=0}^n (-1)^i t_i$$ 
with
\begin{align*}
&\pi_n^k t_{k-1}=t_{k-1},\\
&\pi_n^k t_{k+1}=t_{k+1},\\
&\pi_n^k t_i=0\quad (i\neq k-1,k,k+1).
\end{align*}
We can therefore construct a free augmented directed complex~$W$ and a morphism of free augmented directed complexes $\Pi\colon\Delta_n\to W$ in the following way.

\begin{notation} \label{N7.2}
Let $W$ be the free augmented directed complex obtained from~$V_n^k$ by adjoining an $(n-1)$-dimensional basis element~$t_k'$ and an $n$-dimensional basis element~$s'$ with
\begin{align*}
&\d t_k'=\pi_n^k\d t_k,\\
&\d s'=(-1)^k(t_k'-t_{k-1}-t_{k+1}).
\end{align*}
Let $\Pi\colon\Delta_n\to W$ be the morphism given by
$$\Pi|\Lambda_n^k=\pi_n^k,\quad \Pi t_k=t_k',\quad \Pi s=s'.$$
\end{notation}

We will obtain properties of $\nu W$ by using Theorem~\ref{T5.4}. In order to do this, we must show that $W$~is unital and loop-free. We therefore make the following calculations.

\begin{proposition} \label{P7.3}
Let $a$ be a basis element in~$W$ which is a term in $\Pi(\d^\alpha)^{r+1}s'$ or $\Pi(\d^\alpha)^r t_k'$ for some sign~$\alpha$. If $a$~is a basis element in~$V_n^k$ omitting $k-1$ then $a$~is a term in $(\d^\alpha)^r t_{k-1}$; if $a$~is a basis element in~$V_n^k$ omitting $k+1$ then $a$~is a term in $(\d^\alpha)^r t_{k+1}$.
\end{proposition}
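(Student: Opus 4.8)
The plan is to reduce the assertion to a statement about the single element~$t_k'$, and to prove that by induction on~$r$. From the defining relation $\d s'=\kappa(t_k'-t_{k-1}-t_{k+1})$ one reads off the positive and negative parts of~$\d s'$: namely $\d^\kappa s'=t_k'$ and $\d^{-\kappa}s'=t_{k-1}+t_{k+1}$, where $\d^{-\kappa}$ denotes $\d$ with the sign opposite to~$\kappa$. Hence $(\d^\kappa)^{r+1}s'=(\d^\kappa)^r t_k'$, while $(\d^{-\kappa})^{r+1}s'=(\d^{-\kappa})^r(t_{k-1}+t_{k+1})$. For $r=0$ the chains $(\d^\kappa)^1 s'=t_k'$ and $(\d^\alpha)^0 t_k'=t_k'$ have no term lying in~$V_n^k$, and in the chain $t_{k-1}+t_{k+1}$ the term $t_{k-1}$ is a term of $(\d^{-\kappa})^0 t_{k-1}$ while $t_{k+1}$ is a term of $(\d^{-\kappa})^0 t_{k+1}$; so the case $r=0$ is immediate. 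It therefore remains to treat, for $r\geq 1$ and each sign~$\alpha$, the terms of $(\d^\alpha)^r t_k'$ and of $(\d^\alpha)^r(t_{k-1}+t_{k+1})$.

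The key ingredient is the identity $\d t_k'=\d t_{k-1}+\d t_{k+1}$ in~$W$. Indeed $\d t_k'=\pi_n^k\d t_k=\Pi_n^k\d t_k=\d(\Pi_n^k t_k)$, using that $\Pi_n^k$ is a chain map agreeing with $\pi_n^k$ on $\Lambda_n^k$ (which contains~$\d t_k$), and $\Pi_n^k t_k=t_{k-1}+t_{k+1}$ by applying the formula of Notation~\ref{N6.3} to $t_k=[0,\ldots,k-2,k-1,k+1,k+2,\ldots,n]$. Since $\d t_k'$ and $\d(t_{k-1}+t_{k+1})$ agree as chains, so do their positive and negative parts; hence $\d^\alpha t_k'=\d^\alpha(t_{k-1}+t_{k+1})$, and therefore $(\d^\alpha)^r t_k'=(\d^\alpha)^r(t_{k-1}+t_{k+1})$ for all $r\geq 1$. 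So the task becomes: for $r\geq 1$, every term of $(\d^\alpha)^r(t_{k-1}+t_{k+1})$ that omits $k-1$ is a term of $(\d^\alpha)^r t_{k-1}$, and every such term that omits $k+1$ is a term of $(\d^\alpha)^r t_{k+1}$.

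For the base case $r=1$ I would use the following observation: $t_{k-1}$ and $t_{k+1}$ have exactly one common codimension-one face, namely the codimension-two face $f=[0,\ldots,\widehat{k-1},\ldots,\widehat{k+1},\ldots,n]$ of~$\Delta_n$, and $f$ occurs in $\d t_{k-1}$ and in $\d t_{k+1}$ with opposite signs. Consequently $\d(t_{k-1}+t_{k+1})$ is obtained from $\d t_{k-1}+\d t_{k+1}$ by the single cancellation of the two $f$-terms, so each term of $\d^\alpha(t_{k-1}+t_{k+1})$ is a term of $\d^\alpha t_{k-1}$ or of $\d^\alpha t_{k+1}$, and is different from~$f$. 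If such a term omits $k-1$ but is not a term of $\d^\alpha t_{k-1}$, it must be a term of $\d^\alpha t_{k+1}$ and hence also omits $k+1$; but the only codimension-two face of~$\Delta_n$ omitting both $k-1$ and $k+1$ is~$f$, a contradiction. This handles $r=1$, and the clause ``omits $k+1$'' is symmetric.

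The inductive step $r\geq 2$ is the part I expect to be the main obstacle. One must pass from control of $(\d^\alpha)^{r-1}(t_{k-1}+t_{k+1})$ to control of $(\d^\alpha)^r(\cdot)$, but since $\d^+$ and $\d^-$ are not additive the clean ``single cancellation'' of the case $r=1$ degrades: iterating $\d^\alpha$ can introduce further cancellations, and all of these are among faces of~$f$, the common part of $t_{k-1}$ and~$t_{k+1}$. I would handle this by carrying a stronger inductive hypothesis that pins down precisely which terms of $(\d^\alpha)^r(t_{k-1}+t_{k+1})$ can fail to omit $k-1$---they are forced to be faces of $t_{k+1}$ of a restricted shape---together with an explicit inductive description of the faces of~$f$ that survive; alternatively one can write out $(\d^\alpha)^r t_{k-1}$ and $(\d^\alpha)^r t_{k+1}$ by the block formulas of Proposition~\ref{P5.6} and compare them termwise with $(\d^\alpha)^r(t_{k-1}+t_{k+1})$, tracking at each stage which faces of~$f$ cancel. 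Either way, the real work is the bookkeeping of cancellations among faces of~$f$; the rest is formal.
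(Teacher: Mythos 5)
There is a genuine gap, on two levels. First, you have reinterpreted the statement. Despite the stray primes in the proposition as printed, the chains in question are $\Pi(\d^\alpha)^{r+1}s$ and $\Pi(\d^\alpha)^{r}t_k$: the image under $\Pi$ of iterated boundaries computed in $\Delta_n$, where Proposition~\ref{P5.6} describes every term explicitly. This reading is forced by the way the proposition is used: the proof of Proposition~\ref{P7.4} deduces from it that $\Pi(\d^+)^r s$ and $\Pi(\d^-)^r s$ have no common terms, and only then concludes that they equal $(\d^\pm)^r s'$. Your version, about $(\d^\alpha)^{r+1}s'$ computed inside $W$, is a different assertion; it is not what Proposition~\ref{P7.4} needs (invoking it there would be circular, since the agreement of the two readings is exactly what Proposition~\ref{P7.4} establishes), and it commits you to controlling the positive and negative parts of iterated boundaries of the non-basis chain $t_{k-1}+t_{k+1}$ --- that is, to precisely the cancellation analysis you then cannot complete.

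Second, even on your own reading the argument stops exactly where the content is. The reduction to $t_k'$, the identity $\d t_k'=\d(t_{k-1}+t_{k+1})$, and the case $r=1$ (a single cancelling face $f$) are all correct, but for $r\geq 2$ you only name two possible strategies for the ``bookkeeping of cancellations among faces of~$f$'' without carrying either out; iterating $\d^\alpha$ on $\d^\alpha t_{k-1}+\d^\alpha t_{k+1}-f$ produces further cancellations that the single-cancellation argument does not control, and this is the whole difficulty. The paper sidesteps it entirely: a term $b$ of $(\d^\alpha)^{r+1}s$ or $(\d^\alpha)^r t_k$ is, by Proposition~\ref{P5.6}, a union of blocks of consecutive entries of $s$ or $t_k$ satisfying the parity conditions stated there; one computes $\Pi b$ directly from Notations \ref{N6.3} and~\ref{N7.2} and checks that any term of $\Pi b$ lying in $V_n^k$ and omitting $k-1$ is a union of blocks of $t_{k-1}$ satisfying the same conditions, hence is a term of $(\d^\alpha)^r t_{k-1}$. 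Because everything is computed upstairs in $\Delta_n$, where the iterated boundaries are known termwise, no cancellation in $W$ ever has to be analysed. You should redo the proof along those lines.
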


\begin{proof}
Let $b$ be a basis element in~$\Delta_n$ which is a term in $(\d^\alpha)^{r+1}s$ or $(\d^\alpha)^r t_k$; then $b$~consists of blocks of consecutive entries in $s$ or~$t_k$ satisfying the conditions of Proposition~\ref{P5.6}. It is straightforward to check that any term of $\Pi b$ in~$V_n^k$ omitting $k-1$ consists of blocks of consecutive entries in~$t_{k-1}$ satisfying the same conditions and is therefore a term in $(\d^\alpha)^r t_{k-1}$. A similar argument applies to terms omitting $k+1$.
\end{proof}

\begin{proposition} \label{P7.4}
Let $\alpha$ be any sign; then
\begin{align*}
&(\d^\alpha)^r s'=\Pi(\d^\alpha)^r s\quad (0\leq r\leq n),\\
&(\d^\alpha)^r t_k'=\Pi(\d^\alpha)^r t_k\quad (0\leq r\leq n-1).
\end{align*}
\end{proposition}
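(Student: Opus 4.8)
The plan is to prove the two families of identities by downward induction on $r$, using the definition of $W$ and $\Pi$ in Notation~\ref{N7.2} together with the combinatorial control provided by Proposition~\ref{P7.3}. The base cases $r=0$ are immediate: $(\d^\alpha)^0 s'=s'=\Pi s$ and $(\d^\alpha)^0 t_k'=t_k'=\Pi t_k$ by the very definition of $\Pi$. For the inductive step, observe that $\d s'$ and $\d t_k'$ are defined in Notation~\ref{N7.2} precisely so that $\d s'=\Pi\d s$ and $\d t_k'=\Pi\d t_k$; indeed $\Pi\d s=\pi_n^k\bigl(\sum_i(-1)^i t_i\bigr)=(-1)^{k-1}t_{k-1}+(-1)^k\Pi t_k+(-1)^{k+1}t_{k+1}=(-1)^k(t_k'-t_{k-1}-t_{k+1})=\d s'$, using the formulas for $\pi_n^k t_i$ recalled before Notation~\ref{N7.2}, and $\d t_k'=\pi_n^k\d t_k=\Pi\d t_k$ by construction. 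So at level $r=1$ we have $\d s'=\Pi\d s$ and $\d t_k'=\Pi\d t_k$ as chains; since $\Pi$ is a morphism of free augmented directed complexes it takes sums of basis elements to sums of basis elements, hence it is compatible with splitting a boundary into positive and negative parts, giving $\d^\alpha s'=\Pi\d^\alpha s$ and $\d^\alpha t_k'=\Pi\d^\alpha t_k$ for each sign $\alpha$.

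The essential point, and the main obstacle, is to push this past $r=1$. Knowing $(\d^\alpha)^r s'=\Pi(\d^\alpha)^r s$ as \emph{chains} in $W$ and $\Delta_n$ respectively, I want to conclude $(\d^\alpha)^{r+1}s'=\d^\alpha\bigl((\d^\alpha)^r s'\bigr)=\d^\alpha\Pi\bigl((\d^\alpha)^r s\bigr)$ equals $\Pi(\d^\alpha)^{r+1}s=\Pi\,\d^\alpha\bigl((\d^\alpha)^r s\bigr)$, i.e.\ that $\d^\alpha$ commutes with $\Pi$ on the chain $(\d^\alpha)^r s$. This is \emph{not} automatic, because $\Pi$ is only a chain map for the ordinary boundary $\d$, not for the one-sided boundaries $\d^\pm$; one needs to know that no cancellation occurs when applying $\d$ to $\Pi\bigl((\d^\alpha)^r s\bigr)$ beyond what already occurs for $\d$ applied to $(\d^\alpha)^r s$. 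This is exactly what Proposition~\ref{P7.3} is for. The chain $c=(\d^\alpha)^r s$ is a sum of basis elements of $\Delta_n$, each described by Proposition~\ref{P5.6} in terms of blocks of $s$; applying $\Pi$ and then $\d$, each resulting basis element of $W$ is either already in $V_n^k$ (where $\Pi$ acts as the chain map $\pi_n^k$ and the block description transfers to $t_{k-1}$ or $t_{k+1}$ via Proposition~\ref{P7.3}) or is one of the adjoined generators $t_k'$ or $s'$ appearing with a controlled sign. The bookkeeping shows that $\d\Pi c$ and $\Pi\d c$ have the same expression as a signed sum of basis elements, so in particular the positive parts agree and the negative parts agree.

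Concretely I would organise the inductive step as follows. Fix $\alpha$ and assume the result for $r$. Write $c^\alpha=(\d^\alpha)^r s'$, which by the inductive hypothesis equals $\Pi\bigl((\d^\alpha)^r s\bigr)$. Now compute $\d c^\alpha$ in $W$ and compare it termwise with $\Pi\,\d\bigl((\d^\alpha)^r s\bigr)$ in $\Delta_n$: terms of $c^\alpha$ lying in $V_n^k$ contribute via the chain-map property of $\pi_n^k$ (Proposition~\ref{P6.4} handles the face-operator compatibility, but here one needs only that $\pi_n^k$ is a chain map, which is immediate since $\Pi$ is), using Proposition~\ref{P7.3} to identify which basis elements of $V_n^k$ arise and with which block structure relative to $t_{k-1}$, $t_{k+1}$; the terms $\pm t_k'$ and $\pm s'$ contribute boundaries $\d t_k'=\pi_n^k\d t_k$ and $\d s'=(-1)^k(t_k'-t_{k-1}-t_{k+1})$ that match $\Pi\d t_k$ and $\Pi\d s$ exactly. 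Since $\Pi$ carries a splitting of $\d\bigl((\d^\alpha)^r s\bigr)$ into positive and negative parts to the corresponding splitting of $\d c^\alpha$, we get $(\d^\alpha)^{r+1}s'=\d^\alpha c^\alpha=\Pi\,\d^\alpha\bigl((\d^\alpha)^r s\bigr)=\Pi(\d^\alpha)^{r+1}s$, completing the induction for $s'$; the argument for $t_k'$ is the same but shorter, and terminates at $r=n-1$ because $t_k$ has dimension $n-1$. The ranges $0\le r\le n$ and $0\le r\le n-1$ are exactly the ranges in which the iterated one-sided boundaries are nonzero, so nothing needs to be checked beyond them.
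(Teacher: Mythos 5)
Your proposal is correct and follows essentially the same route as the paper: induction on $r$ (upward, despite your opening phrase ``downward induction''), with the base cases $r=0,1$ checked explicitly, the chain-map identity $\d\Pi=\Pi\d$ driving the step, and Proposition~\ref{P7.3} supplying exactly the no-cancellation fact you need. The one place you could be crisper is the final step: since $(\d^+)^{r}s'$ and $(\d^-)^{r}s'$ are by definition \emph{the} sums of basis elements without common terms whose difference is $\d(\d^\alpha)^{r-1}s'$, it suffices to observe (via Proposition~\ref{P7.3} and the disjointness of $(\d^+)^{r-1}t_{k\pm1}$ and $(\d^-)^{r-1}t_{k\pm1}$ in $\Delta_n$) that $\Pi(\d^+)^{r}s$ and $\Pi(\d^-)^{r}s$ also have no common terms, whence the two decompositions coincide.
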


\begin{proof}
We first prove the result for $(\d^\alpha)^r s$ by induction on~$r$. The result holds for $r=0$ and for $r=1$ because
\begin{align*}
&s'=\Pi s,\\
&\d^\kappa s'= t_k'=\Pi\d^\kappa s,\\
&\d^{-\kappa}s'=t_{k-1}+t_{k+1}=\Pi\d^{-\kappa}s.
\end{align*}
In cases with $2\leq r\leq n$ assume that $(\d^-)^{r-1}s'=\Pi(\d^-)^{r-1}s$. Then
$$\d(\d^-)^{r-1}s'
=\d\Pi(\d^-)^{r-1}s
=\Pi\d(\d^-)^{r-1}s
=\Pi(\d^+)^r s-\Pi(\d^-)^r s.$$
By construction, $(\d^+)^r s'$ and $(\d^-)^r s'$ are sums of basis elements without common terms such that
$$\d(\d^-)^{r-1}s'=(\d^+)^r s'-(\d^-)^r s'.$$
It follows from Proposition~\ref{P7.3} that $\Pi(\d^+)^r s$ and $\Pi(\d^-)^r s$ are also sums of basis elements without common terms, and it then follows that
$$(\d^\alpha)^r s'=\Pi(\d^\alpha)^r s$$
for each sign~$\alpha$.

A similar argument applies to $(\d^\alpha)^r t_k'$, starting with $t_k'=\Pi t_k$.
\end{proof}

\begin{proposition} \label{P7.5}
Let $a$~and~$b$ be basis elements in~$W$ such that $a$~is a term in $(\d^\alpha)^r s'$ or $(\d^\alpha)^r t_k'$ for some~$\alpha$ and such that $a$~is also a term in $(\d^{-\alpha})^q b$. Then $r=0$ or $q=0$.
\end{proposition}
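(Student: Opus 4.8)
The plan is to reduce the statement to a claim about basis elements in the simplex~$\Delta_n$, which can then be settled using the combinatorial description of iterated negative and positive boundaries supplied by Proposition~\ref{P5.6}. By Proposition~\ref{P7.4}, the iterated boundaries $(\d^\alpha)^r s'$ and $(\d^\alpha)^r t_k'$ in~$W$ are precisely $\Pi(\d^\alpha)^r s$ and $\Pi(\d^\alpha)^r t_k$ in~$\Delta_n$; so $a$ being a term in $(\d^\alpha)^r s'$ means $a$ is a term in $\Pi b$ for some basis element~$b$ which is a term in $(\d^\alpha)^r s$, and similarly for the $t_k'$ case. Since a basis element~$a$ in~$W$ lying in~$V_n^k$ comes from a basis element of~$\Delta_n$ omitting $k-1$ or omitting $k+1$ (and the new generators $s',t_k'$ cannot occur as terms in iterated boundaries of themselves except at $r=0$, which is one of the two conclusions), Proposition~\ref{P7.3} tells us that whenever $a\in V_n^k$ omits~$k-1$ it is a term in $(\d^\alpha)^r t_{k-1}$, and whenever it omits~$k+1$ it is a term in $(\d^\alpha)^r t_{k+1}$. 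So I would split into the two cases $k-1\notin a$ and $k+1\notin a$ and work inside $\Delta_n$ with $a$ a term in $(\d^\alpha)^r t_{k\mp 1}$.

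Granting this reduction, the crux is: if a $(q)$-dimensional basis element~$a$ of $\Delta_n$ is simultaneously a term in $(\d^\alpha)^r c$ and in $(\d^{-\alpha})^q b$ for basis elements $c$ of dimension $q+r$ and $b$ of dimension $q+q'$ (with $q'\ge 1$, $r\ge 1$), we derive a contradiction; here $c$ is one of $t_{k-1}, t_{k+1}, s'$-or-$t_k'$ translated back to $\Delta_n$, but the argument should be uniform over all basis elements. The key tool is Proposition~\ref{P5.6}: being a term in $(\d^-)^r c$ means $a=[\mathbf{u}_0,\dots,\mathbf{u}_j]$ where $\mathbf{u}_0$ is an odd initial block in~$c$ and the remaining blocks are even-or-final in~$c$; being a term in $(\d^+)^r c$ means all blocks of~$a$ are even-or-final in~$c$. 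Now look at the \emph{first} vertex of~$a$. If $a$ is a term in $(\d^-)^r c$ then its first vertex equals the first vertex of~$c$ (as $\mathbf{u}_0$ is initial); if $a$ is a term in $(\d^+)^r c$ with $r\ge 1$ then its first vertex is \emph{strictly greater} than the first vertex of~$c$, because $\mathbf{u}_0$ is an even-or-final block of~$c$ not containing the first vertex of~$c$ (it has positive even length, or if it were the final block that would force $r=0$). The same analysis applied with $b$ in place of~$c$ and the opposite sign shows that the first vertex of~$a$ relative to the first vertex of~$b$ behaves in the opposite way. The plan is to chase these inequalities on first vertices — tracking also the first vertices of $s$, $t_k$, $t_{k-1}$, $t_{k+1}$, which are $0$, $0$ or $1$ — and show that the two requirements on the first vertex of~$a$ are incompatible unless $r=0$ or $q'=0$. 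One must be slightly careful: the statement uses $q$ (the dimension of~$a$) as the exponent in $(\d^{-\alpha})^q b$, so $b$ is $2q$-dimensional; this extreme case actually makes the first-vertex argument cleaner, since $(\d^{-\alpha})^q b$ strips $b$ down to a $q$-simplex and the positional constraints are tightest.

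I expect the main obstacle to be organising the case analysis cleanly: there are two signs~$\alpha$, two sub-cases ($k-1\notin a$ versus $k+1\notin a$) feeding into Proposition~\ref{P7.3}, and within each the dichotomy of whether $a$ is a $(\d^-)$-term or a $(\d^+)$-term of the relevant generator. Rather than a literal first-vertex chase it may be cleaner to exploit the total order on basis elements constructed in the proof of Proposition~\ref{P5.7}: that order has the property that $a$ a term in $(\d^-)^r c$ forces $a\le c$ and $a$ a term in $(\d^+)^r c$ forces $a\ge c$ in that order, with equality only when $r=0$. Then $a$ a term in $(\d^\alpha)^r c$ and in $(\d^{-\alpha})^{q'} b$ would sandwich, after also comparing $b$ and $c$ via their own boundary relationships to the common ambient generators $s$ or $t_k$, to give $r=0$ or $q'=0$ directly. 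Either way the real content is Proposition~\ref{P5.6} together with the monotonicity it implies; I would write the proof in the order: (1) reduce from~$W$ to~$\Delta_n$ via Propositions~\ref{P7.3} and~\ref{P7.4}; (2) record the monotonicity consequence of Proposition~\ref{P5.6}; (3) combine the two monotonicity constraints on~$a$ to force $r=0$ or $q=0$.
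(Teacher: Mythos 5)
Your opening reduction (using Propositions~\ref{P7.4} and~\ref{P7.3} to replace terms of $(\d^\alpha)^r s'$ and $(\d^\alpha)^r t_k'$ by terms of $(\d^\alpha)^{r'}t_{k\mp1}$ according to whether $a$ omits $k-1$ or $k+1$) is exactly the paper's first move. But the combinatorial core of your plan fails. The claimed monotonicity for positive boundaries is false: if $a$ is a term in $(\d^+)^r c$ with $r\geq 1$, Proposition~\ref{P5.6} only requires each block of $a$ in $c$ to be even or final, and an even block may perfectly well be initial --- for example $[0,1]$ is a term of $\d^+[0,1,2]$, so the first vertex of $a$ need not be strictly greater than that of $c$. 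The fallback via the total order from the proof of Proposition~\ref{P5.7} fares no better: that order compares only basis elements of the \emph{same} dimension, so ``$a\leq c$'' for $a$ a boundary term of $c$ is not even defined, and no monotonicity of the kind you want is established anywhere in the paper. (You also misread the statement: $q$ is just the exponent, not the dimension of $a$, so $b$ need not be $2q$-dimensional.)

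The idea you are missing is a \emph{transfer} step that puts both hypotheses on the same basis element $b$, after which the conclusion is immediate. Since $a$ is a term of $(\d^{-\alpha})^q b$, every entry of $a$ is an entry of $b$; and if $b$ lies in $V_n^k$ omitting $k-1$, the entries of $b$ form a subset of the entries of $t_{k-1}$, so entries of $a$ that are consecutive in $t_{k-1}$ are also consecutive in $b$, and the odd/even/initial/final properties of blocks carry over. Hence Proposition~\ref{P5.6} converts ``$a$ is a term of $(\d^\alpha)^{r'}t_{k-1}$'' (supplied by Proposition~\ref{P7.3}) into ``$a$ is a term of $(\d^\alpha)^q b$''. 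Now $a$ is a term of both $(\d^+)^q b$ and $(\d^-)^q b$, which forces its first block in $b$ to be simultaneously odd-initial and even-or-final, hence odd, initial and final, hence all of $b$; so $q=0$. You must also treat the cases $b=s'$ and $b=t_k'$ separately, since there the hypothesis ``$a$ is a term of $(\d^{-\alpha})^q b$'' does not reduce to a statement inside $\Delta_n$ by your step~(1); the paper handles these directly from the construction of $W$, using that $(\d^-)^r s'$ and $(\d^+)^r s'$ share no terms for $r>0$ and that $(\d^\pm)^r s'=(\d^\pm)^{r-1}t_k'$ for $r>1$.
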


\begin{proof}
By construction, if $r>0$ then $(\d^-)^r s'$ and $(\d^+)^r s'$ have no common terms. For $r>1$ we have
$$(\d^-)^r s'=(\d^-)^{r-1}t_k',\quad (\d^+)^r s'=(\d^+)^{r-1}t_k',$$
because $\d^\kappa s'=t_k'$. This gives the result for $b=s'$ and for $b=t_k'$.

Now let $b$ be a basis element in~$V_n^k$ omitting $k-1$. Since $a$~is a term in $(\d^{-\alpha})^q b$, it follows that  $a$~is also a basis element in~$V_n^k$ omitting $k-1$. By Proposition~\ref{P7.3}, $a$~is a term in $(\d^\alpha)^{r-1}t_{k-1}$ or $(\d^\alpha)^r t_{k-1}$. It now follows from Proposition~\ref{P5.6} that $a$~is a term in $(\d^\alpha)^q b$, because terms of~$a$ which are consecutive in~$t_{k-1}$ are also consecutive in~$b$. This makes~$a$ a term in $(\d^\alpha)^q b$ and in $(\d^{-\alpha})^q b$; therefore $q=0$.

A similar argument applies when $b$~is a basis element in~$V_n^k$ omitting $k+1$.
\end{proof}

\begin{theorem} \label{T7.6}
The free augmented directed complex~$W$ is unital and loop-free.
\end{theorem}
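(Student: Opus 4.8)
The plan is to verify the two conditions of Definition~\ref{D5.2} for $W$, taking advantage of the fact that $W$ is built from the unital loop-free complex $V_n^k$ (Proposition~\ref{P6.2}) by adjoining just the two basis elements $t_k'$ and $s'$, and that Propositions~\ref{P7.3}--\ref{P7.5} already package most of the combinatorial control we need over their iterated boundaries.

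For \emph{unitality}, I must show $\e(\d^-)^p a=\e(\d^+)^p a=1$ for every basis element $a$ of dimension $p$. For $a$ in $V_n^k$ this is immediate from Proposition~\ref{P6.2}. For $a=t_k'$ (dimension $n-1$), Proposition~\ref{P7.4} gives $(\d^\alpha)^{n-1}t_k'=\Pi(\d^\alpha)^{n-1}t_k$, and $(\d^\alpha)^{n-1}t_k$ is a single vertex $[j]$ of $\Delta_n$ with $\e[j]=1$; since $\Pi$ fixes vertices (it is augmentation-preserving and $0$-chains of $\Delta_n$ lie in $V_n^k$, where $\Pi$ is the identity on basis elements not involving both $k-1$ and $k+1$ — vacuous in dimension $0$), we get augmentation $1$. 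The same argument with Proposition~\ref{P7.4} applied to $(\d^\alpha)^n s'=\Pi(\d^\alpha)^n s$ handles $a=s'$. So unitality reduces to the observation that $\Pi$ preserves the augmentation, which is built into Notation~\ref{N7.2}.

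For \emph{loop-freeness}, I must show the transitive closure of $<$ is a strict partial order on the basis elements of $W$, i.e. there is no cycle $a_1<a_2<\cdots<a_m<a_1$. The relation $<$ restricted to basis elements of $V_n^k$ is already a strict partial order (Proposition~\ref{P6.2}), so any hypothetical cycle must involve $s'$ or $t_k'$. Here is where Propositions~\ref{P7.3}--\ref{P7.5} do the work. Proposition~\ref{P7.5} says that if a basis element $a$ is a term in some $(\d^\alpha)^r s'$ or $(\d^\alpha)^r t_k'$ with $r>0$ and also a term in some $(\d^{-\alpha})^q b$, then $q=0$; unwinding the definition of $<$, this says that $s'$ and $t_k'$ cannot sit below any basis element in the relation $<$ — more precisely, nothing can relate to $s'$ or $t_k'$ from above in a way that would close a cycle. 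So I would argue: order the basis elements of $W$ by putting $s'$ first, then $t_k'$, then the basis elements of $V_n^k$ in an order compatible with their own strict partial order; Propositions~\ref{P7.3} and~\ref{P7.5} then show that whenever $a<a'$ in $W$, $a$ precedes $a'$ in this ordering (the only new cases to check are those where $a$ or $a'$ is $s'$ or $t_k'$, and for $a'=s'$ we use that $\d^\kappa s'=t_k'$ so the boundary terms of $s'$ beneath depth $1$ are exactly those of $t_k'$, while for the $V_n^k$-terms of $(\d^\alpha)^r t_k'$ Proposition~\ref{P7.3} identifies them with terms of $(\d^\alpha)^r t_{k\pm1}$, whose $<$-behaviour inside $V_n^k$ is already known). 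Since $<$ is then contained in a total order, it is loop-free.

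The main obstacle will be the loop-freeness verification, specifically handling the basis elements that lie in $V_n^k$ but arise as terms of $\Pi(\d^\alpha)^r s$ and $\Pi(\d^\alpha)^r t_k$: one must confirm that their position relative to $t_{k-1}$ and $t_{k+1}$ (supplied by Proposition~\ref{P7.3}) is consistent with the chosen total order, and that no term introduced by the splitting $\Pi[\mathbf{u},k-1,k+1,\mathbf{v}]=[\mathbf{u},k,k+1,\mathbf{v}]+[\mathbf{u},k-1,k,\mathbf{v}]$ creates a backward relation. This is the same sort of block-bookkeeping used to prove Proposition~\ref{P5.7}, so I expect it to go through cleanly once the ordering is fixed, but it is the step that needs care.
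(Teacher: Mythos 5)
Your unitality argument is correct and coincides with the paper's: Proposition~\ref{P7.4} together with the facts that $\Pi$ preserves the augmentation and that $\Delta_n$ is unital.

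The loop-freeness argument has a genuine gap. First, you misidentify which instances of $<$ are new in $W$. Since $<$ relates basis elements of equal dimension via a witness $b$ of strictly higher dimension, the new relations are exactly those witnessed by $b=s'$ or $b=t_k'$; except in dimension $n-1$, \emph{both} endpoints of such a relation lie in $V_n^k$. These new relations between pairs of $V_n^k$-elements are the real issue, and your plan of ordering the basis of $V_n^k$ ``in an order compatible with their own strict partial order'' does not address them: to choose a total order compatible with both the old and the new relations you would already need to know that their union is acyclic, which is what is to be proved. Second, your concrete order is wrong for one parity of $k$: when $\kappa=+$ we have $\d^+s'=t_k'$ and $\d^-s'=t_{k-1}+t_{k+1}$, so $t_{k-1}<t_k'$ and $t_{k+1}<t_k'$; the $V_n^k$-elements $t_{k\pm1}$ then lie \emph{below} $t_k'$, contradicting your placement of $t_k'$ before all of $V_n^k$. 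Third, you misread Proposition~\ref{P7.5}: it is not a statement about $s'$ and $t_k'$ ``sitting below'' other elements, but about the terms $a$ of their iterated boundaries. Its correct use, which is the paper's whole argument, is this: if $a<a'$ is a new relation, so that $a$ is a term of $(\d^-)^rb_0$ and $a'$ a term of $(\d^+)^rb_0$ with $b_0\in\{s',t_k'\}$ and $r>0$, then Proposition~\ref{P7.5} forbids $a$ from being a term of any $(\d^+)^qb$ with $q>0$ and forbids $a'$ from being a term of any $(\d^-)^qb$ with $q>0$; hence there is no relation $c<a$ and no relation $a'<c'$, so a new relation can never occur inside a cycle, and adjoining the new relations to the strict partial order of $V_n^k$ (Proposition~\ref{P6.2}) keeps the transitive closure a strict partial order. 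Replacing your total-order construction by this minimality/maximality argument closes the gap.
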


\begin{proof}
By Proposition~\ref{P6.2} the subcomplex~$V_n^k$ is unital and loop-free; it therefore suffices to consider the effect of the additional basis elements $s'$~and~$t_k'$.

Since $\Pi$~is augmentation-preserving and $\Delta_n$~is unital, it follows from Proposition~\ref{P7.4} that  
$$\e(\d^\alpha)^n s'=\e\Pi(\d^\alpha)^n s=\e(\d^\alpha)^n s=1$$
and it similarly follows that $\e(\d^\alpha)^{n-1}t_k'=1$; therefore $W$~is unital.

Next we consider the relations $a<a'$ between basis elements of~$W$. The transitive closure of the relations coming from~$V_n^k$ is a partial ordering because $V_n^k$~is loop-free. If $a<a'$ is a relation coming from the extra basis elements $s'$~and~$t_k'$, then it follows from Proposition~\ref{P7.5} that there are no relations of the form $c<a$ or $a'<c'$. The transitive closure therefore remains a partial ordering even after the extra relations are included, and it follows that $W$~is loop-free.

This completes the proof.
\end{proof}

It follows from Theorems \ref{T7.6} and~\ref{T5.4} that $\nu W$ has a presentation in terms of its atoms. For these atoms we have the following results. 

\begin{proposition} \label{P7.7}
The atoms $\langle s'\rangle$~and~$\langle t_k'\rangle$ in $\nu W$ are such that
$$\langle s'\rangle=(\nu\Pi)\langle s\rangle,\quad
\langle t_k'\rangle=(\nu\Pi)\langle t_k\rangle$$
and
$$d_{n-1}^\kappa\langle s'\rangle=\langle t_k'\rangle,\quad
d_{n-1}^{-\kappa}\langle s'\rangle\in\nu V_n^k.$$
\end{proposition}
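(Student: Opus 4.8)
The plan is to derive both identities from Proposition~\ref{P7.4} together with the definitions of the atom functor (Definition~\ref{D5.3}) and of $\Pi$ (Notation~\ref{N7.2}). First I would record the shape of $\langle s'\rangle$ and $\langle t_k'\rangle$: since $s'$ has dimension~$n$ and $t_k'$ has dimension $n-1$ in~$W$, Definition~\ref{D5.3} gives
$$\langle s'\rangle=(\,(\d^-)^n s',(\d^+)^n s'\mid\ldots\mid \d^- s',\d^+ s'\mid s',s'\mid 0,\ldots\,),$$
and similarly for $\langle t_k'\rangle$. Applying $\nu\Pi$ to $\langle s\rangle$ and $\langle t_k\rangle$ amounts to applying $\Pi$ termwise (this is how $\nu$ acts on morphisms), so the $q$-th entries of $(\nu\Pi)\langle s\rangle$ are $\Pi(\d^\mp)^{n-q}s$. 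By Proposition~\ref{P7.4} these equal $(\d^\mp)^{n-q}s'$ for $0\le q\le n$, which are precisely the entries of $\langle s'\rangle$; hence $\langle s'\rangle=(\nu\Pi)\langle s\rangle$. The same comparison, using the second line of Proposition~\ref{P7.4} for $0\le q\le n-1$ and the fact that $\Pi t_k=t_k'$ in top degree, yields $\langle t_k'\rangle=(\nu\Pi)\langle t_k\rangle$.

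Next I would compute $d_{n-1}^\kappa\langle s'\rangle$. By Notation~\ref{N2.3}, $d_{n-1}^\alpha$ truncates a member of $\nu W$ at level $n-1$, replacing the $(n-1)$-entries by the single chain $x_{n-1}^\alpha$ and the $n$-entries by $0$. For $\langle s'\rangle$ we have $x_{n-1}^- = \d^- s'$ and $x_{n-1}^+ = \d^+ s'$. Since $\d s' = (-1)^k(t_k'-t_{k-1}-t_{k+1})$ and $\kappa=(-)^k$, separating positive and negative parts gives $\d^\kappa s'=t_k'$ and $\d^{-\kappa}s'=t_{k-1}+t_{k+1}$ (these two sums have no common terms, so this identification of $\d^+s'$ and $\d^-s'$ is legitimate). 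Therefore $d_{n-1}^\kappa\langle s'\rangle$ has $(n-1)$-entries both equal to $t_k'$ and all lower entries equal to those of $\langle t_k'\rangle$ — indeed the lower entries of $\langle s'\rangle$ below level $n-1$ coincide with those of $\langle t_k'\rangle$ because, by Proposition~\ref{P7.4} (or directly, since $\d\d^\kappa=\d\d^{-\kappa}$), $(\d^\alpha)^r s'=(\d^\alpha)^{r-1}t_k'$ for $r\ge 1$. Hence $d_{n-1}^\kappa\langle s'\rangle=\langle t_k'\rangle$. Finally, $d_{n-1}^{-\kappa}\langle s'\rangle$ has $(n-1)$-entries equal to $\d^{-\kappa}s'=t_{k-1}+t_{k+1}$ and lower entries matching $\langle s'\rangle$; all of these chains lie in $V_n^k$, since $t_{k-1},t_{k+1}$ and the lower-dimensional faces of $s'$ (which are sums of basis elements of $V_n^k$ by the description of $W$ and Proposition~\ref{P7.3}) belong to $V_n^k$. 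Thus $d_{n-1}^{-\kappa}\langle s'\rangle\in\nu V_n^k$.

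I expect the main obstacle to be bookkeeping rather than anything conceptual: one must be careful that the termwise application of $\Pi$ really computes the atom functor on morphisms, and that the positive/negative splitting of $\d s'$ is the one forced by Notation~\ref{N7.2} (this is where Proposition~\ref{P7.3}, guaranteeing $\Pi(\d^+)^r s$ and $\Pi(\d^-)^r s$ share no terms, does the real work — it is already packaged inside Proposition~\ref{P7.4}, so I would simply cite it). The verification that the lower-dimensional entries of $d_{n-1}^{-\kappa}\langle s'\rangle$ land in $\nu V_n^k$ is immediate once one observes that $W$ is obtained from $V_n^k$ by adjoining only $s'$ and $t_k'$, neither of which can appear in a face of $s'$ of dimension $\le n-2$.
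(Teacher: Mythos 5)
Your argument is correct and takes essentially the same route as the paper, whose proof simply cites Proposition~\ref{P7.4} for the first two equalities and reads off the last two from $\d^\kappa s'=t_k'$ and $\d^{-\kappa}s'=t_{k-1}+t_{k+1}\in V_n^k$; your extra bookkeeping (the termwise action of $\nu\Pi$, the no-common-terms point inherited from Proposition~\ref{P7.3}, and the observation that $W$ adjoins no basis elements of dimension below $n-1$) just makes explicit what the paper leaves implicit. The only nit is that $(\d^\alpha)^r s'=(\d^\alpha)^{r-1}t_k'$ holds for $r\ge 2$ (for $r=1$ it holds only when $\alpha=\kappa$), but since you invoke it only for the entries below level $n-1$, where $r\ge 2$, nothing breaks.
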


\begin{proof}
The equalities $\langle s'\rangle=(\nu\Pi)\langle s\rangle$ and $\langle t_k'\rangle=(\nu\Pi)\langle t_k\rangle$ are restatements of Proposition~\ref{P7.4}. The equality $d_{n-1}^\kappa\langle s'\rangle=\langle t_k'\rangle$ holds because $\d^\kappa s'=t_k'$. We have $d_{n-1}^{-\kappa}\langle s'\rangle\in\nu V_n^k$ because
$$\d^{-\kappa}s'=t_{k-1}+t_{k+1}\in V_n^k.$$
\end{proof}

It follows that $\nu W$ is a push-out in the required way.

\begin{theorem} \label{T7.8}
The square of $\omega$-categories
$$\xymatrix{
\nu\Lambda_n^k \ar_{\nu\pi_n^k}[d] \ar^{\subset}[r] & \nu\Delta_n \ar^{\nu\Pi}[d] \\
\nu V_n^k \ar^{\subset}[r] & \nu W
}$$
is a push-out square. The $\omega$-category $\nu W$ is generated as an extension of $\nu V_n^k$ by the atom~$\langle s'\rangle$.
\end{theorem}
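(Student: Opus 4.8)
The plan is to verify the universal property of the push-out directly, using the presentation of~$\nu W$ supplied by Theorems~\ref{T7.6} and~\ref{T5.4}. Since $W$~is unital and loop-free (Theorem~\ref{T7.6}), the $\omega$-category~$\nu W$ is generated by its atoms subject to the relations of Theorem~\ref{T5.4}, and by Notation~\ref{N7.2} these atoms are the atoms of~$V_n^k$ together with $\langle t_k'\rangle$ and~$\langle s'\rangle$. The square commutes: since $\Pi|\Lambda_n^k=\pi_n^k$ by Notation~\ref{N7.2} and $\nu$~is a functor, the restriction of~$\nu\Pi$ to~$\nu\Lambda_n^k$ is~$\nu\pi_n^k$.

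Now let $C$~be an $\omega$-category with morphisms $f\colon\nu\Delta_n\to C$ and $g\colon\nu V_n^k\to C$ such that $f$~restricted to~$\nu\Lambda_n^k$ equals $g\circ\nu\pi_n^k$; I must produce a unique $h\colon\nu W\to C$ with $h\circ\nu\Pi=f$ and $h|\nu V_n^k=g$. Uniqueness is immediate: the condition $h|\nu V_n^k=g$ forces $h\langle b\rangle=g\langle b\rangle$ on the atoms~$\langle b\rangle$ of~$V_n^k$, which generate~$\nu V_n^k$, and Proposition~\ref{P7.7} gives $\langle t_k'\rangle=(\nu\Pi)\langle t_k\rangle$ and $\langle s'\rangle=(\nu\Pi)\langle s\rangle$, so the condition $h\circ\nu\Pi=f$ forces $h\langle t_k'\rangle=f\langle t_k\rangle$ and $h\langle s'\rangle=f\langle s\rangle$. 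This pins down~$h$ on every generator of~$\nu W$, and it remains to check that these assignments respect the defining relations of Theorem~\ref{T5.4}.

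For the atoms of~$V_n^k$ I would take the boundary expressions~$w^\pm$ used in the presentation of~$\nu W$ to be those used in the presentation of~$\nu V_n^k$ (legitimate because $V_n^k$~is a subcomplex of~$W$, so these expressions describe the same boundaries in~$\nu W$), and the corresponding relations are then respected because $g$~is a morphism. The relations $d_{n-1}^\pm\langle t_k'\rangle=\langle t_k'\rangle$ and $d_n^\pm\langle s'\rangle=\langle s'\rangle$ hold because the analogous relations hold for $\langle t_k\rangle$ and~$\langle s\rangle$ and $f$~is a morphism. The substantive relations are $d_{n-2}^\pm\langle t_k'\rangle=w^\pm(t_k')$ and $d_{n-1}^\pm\langle s'\rangle=w^\pm(s')$: since $\langle t_k'\rangle=(\nu\Pi)\langle t_k\rangle$ and $\langle s'\rangle=(\nu\Pi)\langle s\rangle$, we have $d_{n-2}^\pm\langle t_k'\rangle=(\nu\Pi)d_{n-2}^\pm\langle t_k\rangle$ and $d_{n-1}^\pm\langle s'\rangle=(\nu\Pi)d_{n-1}^\pm\langle s\rangle$, so I would take $w^\pm(t_k')$ and $w^\pm(s')$ to be the images under~$\nu\Pi$ of chosen boundary expressions for $\langle t_k\rangle$ and~$\langle s\rangle$ in~$\nu\Delta_n$. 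One checks these images are admissible: the atoms~$\langle e\rangle$ appearing there have $e$~a basis element of~$\Lambda_n^k$ (or $e=t_k$, for~$s$), and $\nu\Pi$ carries $\langle e\rangle$ to $(\nu\pi_n^k)\langle e\rangle$, an iterated composite of atoms of~$V_n^k$ of dimension at most~$\dim e$ (as $\pi_n^k$ does not raise dimension, by Notation~\ref{N6.3}), and carries $\langle t_k\rangle$ to~$\langle t_k'\rangle$; a dimension count then confirms the images are iterated composites of atoms of~$W$ of the dimensions required by Theorem~\ref{T5.4}. With these choices, applying $h$ to $w^\pm(t_k')$ and~$w^\pm(s')$ and using $h\circ\nu\Pi=f$ on the atoms involved together with the fact that $f$~is a morphism shows that $h$~sends them to $d_{n-2}^\pm h\langle t_k'\rangle$ and $d_{n-1}^\pm h\langle s'\rangle$. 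Hence $h$~exists, and the square is a push-out.

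The second assertion then follows at once: $\nu W$ is generated by the atoms of~$W$, namely the atoms of~$V_n^k$ together with $\langle t_k'\rangle$ and~$\langle s'\rangle$, and $\langle t_k'\rangle=d_{n-1}^\kappa\langle s'\rangle$ by Proposition~\ref{P7.7}; so $\nu W$ is already generated as an extension of~$\nu V_n^k$ by the single atom~$\langle s'\rangle$. I expect the one genuinely delicate point to be the dimension bookkeeping for the boundary expressions---checking that the $\nu\Pi$-images of the chosen boundary expressions for $\langle t_k\rangle$ and~$\langle s\rangle$ are again iterated composites of atoms of~$W$ of strictly lower dimension---which is the sort of term-by-term control established in Propositions~\ref{P7.3} and~\ref{P7.4}.
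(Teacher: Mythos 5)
Your proposal is correct and follows essentially the same route as the paper: both rest on Theorem~\ref{T7.6} (so that Theorem~\ref{T5.4} gives a presentation of $\nu W$ by its atoms) together with Proposition~\ref{P7.7}, and on the observation that the generators $\langle t_k'\rangle,\langle s'\rangle$ and their defining relations are the images under $\nu\Pi$ of the corresponding generators and relations of $\nu\Delta_n$, with $\langle t_k'\rangle=d_{n-1}^\kappa\langle s'\rangle$ disposing of the second assertion. The paper compresses the verification of the universal property into a single sentence, whereas you spell it out (including the choice of boundary expressions $w^\pm$ and the dimension bookkeeping for their $\nu\Pi$-images), but there is no difference of substance.
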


\begin{proof}
It follows from Theorem~\ref{T5.4}, Theorem~\ref{T7.6} and Proposition~\ref{P7.7} that $\nu W$ is generated as an extension of $\nu V_n^k$ by $\langle s'\rangle$~and~$\langle t_k'\rangle$ subject to relations which are the images under $\nu\Pi$ of relations in $\nu\Delta_n$. This shows that the square is a push-out square. Since $\langle t_k'\rangle=d_{n-1}^\kappa\langle s'\rangle$, the single atom~$\langle s'\rangle$ suffices to generate $\nu W$ as an extension of $\nu V_n^k$.
\end{proof}

We can now prove the main theorem of this section.

\begin{theorem} \label{T7.9}
Let $C$ be an $\omega$-category and let $y$ be a morphism in $\Hom(\nu V_n^k,C)$. Then the composite 
$$y(\nu\Pi_n^k)\in\Hom(\nu\Delta_n,C)$$
is the unique thin filler for the $(n-1)$-dimensional $k$-horn represented by
$$y(\nu\pi_n^k)\in\Hom(\nu\Lambda_n^k,C).$$
\end{theorem}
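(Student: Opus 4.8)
The plan is to verify that $x:=y(\nu\Pi_n^k)$ is a thin filler directly, and then to obtain uniqueness from the push-out square of Theorem~\ref{T7.8}. One elementary fact is needed throughout. Since $0<k<n$, the unique $n$-dimensional basis element $s=[0,1,\ldots,n]$ of~$\Delta_n$ contains both $k-1$ and~$k+1$, so by Notation~\ref{N6.1} the complex~$V_n^k$ has no basis element of dimension~$n$. By Theorem~\ref{T5.4} and Proposition~\ref{P6.2}, $\nu V_n^k$ is therefore generated by the atoms $\langle a\rangle$ with $\dim a\leq n-1$, and each of these is fixed by $d_{n-1}^-$ and~$d_{n-1}^+$; hence by Proposition~\ref{P2.1} so is every element of $\nu V_n^k$.

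Write $z=y(\nu\pi_n^k)$. Because $\Pi_n^k|\Lambda_n^k=\pi_n^k$ (Notation~\ref{N6.3}), the restriction of~$x$ along the inclusion $\nu\Lambda_n^k\to\nu\Delta_n$ is $y(\nu\pi_n^k)=z$, so by Theorem~\ref{T5.8} $x$~fills the horn represented by~$z$. To see that $x$~is thin, let $u\in\nu\Delta_n$ and let $\alpha$ be a sign; then $(\nu\Pi_n^k)u$ lies in $\nu V_n^k$ and so is fixed by~$d_{n-1}^\alpha$, whence
$$d_{n-1}^\alpha xu=d_{n-1}^\alpha y(\nu\Pi_n^k)u=yd_{n-1}^\alpha(\nu\Pi_n^k)u=y(\nu\Pi_n^k)u=xu,$$
and $x$~is thin by Definition~\ref{D3.1}.

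For uniqueness, let $x'$ be any thin filler of the horn represented by~$z$. By Theorem~\ref{T5.8} the restriction of~$x'$ along $\nu\Lambda_n^k\to\nu\Delta_n$ is~$z$, which is $y(\nu\pi_n^k)$; hence $(x',y)$ is a cocone on the push-out square of Theorem~\ref{T7.8} and induces a morphism $h'\colon\nu W\to C$ with $h'(\nu\Pi)=x'$ and with restriction~$y$ to $\nu V_n^k$. The pair $(x,y)$ is a cocone in the same way and induces $h\colon\nu W\to C$ with $h(\nu\Pi)=x$ and with restriction~$y$ to $\nu V_n^k$. By Theorem~\ref{T7.8} the $\omega$-category $\nu W$ is generated as an extension of $\nu V_n^k$ by the single atom~$\langle s'\rangle$; since $h$~and~$h'$ already agree on $\nu V_n^k$, it therefore suffices to show that $h\langle s'\rangle=h'\langle s'\rangle$, and because $\langle s'\rangle=(\nu\Pi)\langle s\rangle$ (Proposition~\ref{P7.7}) this is precisely the equality $x\langle s\rangle=x'\langle s\rangle$.

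I expect this equality to be the main point, and it is where the sign $\kappa=(-)^k$ enters. The chain $\d^{-\kappa}s$ is the sum of those $t_i$ with $(-1)^i=-\kappa$ and hence omits~$t_k$; so every component of $d_{n-1}^{-\kappa}\langle s\rangle$ is a sum of basis elements of~$\Lambda_n^k$, that is, $d_{n-1}^{-\kappa}\langle s\rangle\in\nu\Lambda_n^k$. Since $x'$~is thin, Definition~\ref{D3.1} gives
$$x'\langle s\rangle=d_{n-1}^{-\kappa}x'\langle s\rangle=x'd_{n-1}^{-\kappa}\langle s\rangle=y(\nu\pi_n^k)d_{n-1}^{-\kappa}\langle s\rangle.$$
On the other hand $\Pi_n^k s=0$ since $s$~contains $k-1,k,k+1$, so $(\nu\Pi_n^k)\langle s\rangle$ lies in $\nu V_n^k$ and is fixed by~$d_{n-1}^{-\kappa}$; using $\Pi_n^k|\Lambda_n^k=\pi_n^k$ once more,
$$x\langle s\rangle=y(\nu\Pi_n^k)\langle s\rangle=yd_{n-1}^{-\kappa}(\nu\Pi_n^k)\langle s\rangle=y(\nu\Pi_n^k)d_{n-1}^{-\kappa}\langle s\rangle=y(\nu\pi_n^k)d_{n-1}^{-\kappa}\langle s\rangle.$$
Hence $x\langle s\rangle=x'\langle s\rangle$, so $h=h'$, and therefore $x'=h'(\nu\Pi)=h(\nu\Pi)=x$.
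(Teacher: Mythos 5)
Your proof is correct and follows essentially the same route as the paper's: existence and thinness come from the fact that $\nu V_n^k$ is concentrated in dimensions at most $n-1$, and uniqueness comes from the push-out of Theorem~\ref{T7.8} together with the observation that thinness pins down the value on the generating atom via $d_{n-1}^{-\kappa}$, whose value lies in the horn. The only cosmetic difference is that you run this last computation upstairs in $\nu\Delta_n$, using $d_{n-1}^{-\kappa}\langle s\rangle\in\nu\Lambda_n^k$, whereas the paper works in $\nu W$ with $\langle s'\rangle$ and $d_{n-1}^{-\kappa}\langle s'\rangle\in\nu V_n^k$ (Proposition~\ref{P7.7}).
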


\begin{proof}
The morphism $y(\nu\Pi_n^k)$ is certainly a filler for $y(\nu\pi_n^k)$ because
$\Pi_n^k|\Lambda_n^k=\pi_n^k$. It is a thin filler because $d_{n-1}^\alpha v=v$ for all $v\in\nu V_n^k$.

It now suffices to show that there is at most one thin filler. To do this, let $x$ be an arbitrary thin filler. By Theorem~\ref{T7.8} there is a morphism
$$w\in\Hom(\nu W,C)$$
such that
$$w|\nu V_n^k=y,\quad w(\nu\Pi)=x.$$
By Proposition~\ref{P7.7}, $w\langle s'\rangle=x\langle s\rangle$. Since $x$~is thin, 
$w\langle s'\rangle=d_{n-1}^{-\kappa} w\langle s'\rangle$. By another application of Proposition~\ref{P7.7}, 
$$d_{n-1}^{-\kappa}w\langle s'\rangle=w d_{n-1}^{-\kappa}\langle s'\rangle=y d_{n-1}^{-\kappa}\langle s'\rangle;$$
therefore
$$w\langle s'\rangle
=y d_{n-1}^{-\kappa}\langle s'\rangle.$$
Since $\nu W$ is generated by~$\langle s'\rangle$ as an extension of $\nu V_n^k$ and since $x$~is $w(\nu\Pi)$ it follows that $x$~is uniquely determined by~$y$. This completes the proof.
\end{proof}

\section{Nerves as complicial sets} \label{S8}

We have shown in Proposition~\ref{P3.3} that the nerves of $\omega$-categories are naturally stratified simplicial sets. We will now show that they are complicial sets, with complicial elements represented by the complexes~$V_n^k$.

\begin{theorem} \label{T8.1}
Let $C$ be an $\omega$-category. For integers $n$~and~$k$ with $0<k<n$, an $(n-1)$-dimensional $k$-horn in the nerve of~$C$ is $k$-complicial if and only if it is represented by a composite $y(\nu\pi_n^k)$ with $y\in\Hom(\nu V_n^k,C)$, and an $n$-dimensional element in the nerve of~$C$ is $k$-complicial if and only if it is represented by a composite $y(\nu\Pi_n^k)$ with $y\in\Hom(\nu V_n^k,C)$.
\end{theorem}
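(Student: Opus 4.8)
The plan is to prove both statements simultaneously by a single induction on~$n$, using the recursive structure of Definition~\ref{D4.1} together with the commuting squares of Propositions \ref{P6.4} and~\ref{P6.5} and the filler characterisation of Theorem~\ref{T7.9}. The two claims are entangled: the notion of $k$-complicial horn in dimension $n-1$ refers to $(k-1)$-complicial and $k$-complicial \emph{elements} in dimension $n-1$, and a $k$-complicial element in dimension~$n$ is by definition a thin filler of a $k$-complicial horn in dimension~$n-1$. So I would set up the induction hypothesis to assert, for all smaller dimensions and all relevant middle indices $j$, that $j$-complicial elements are exactly the composites $y(\nu\Pi^j_m)$ and $j$-complicial horns are exactly the composites $y(\nu\pi^j_m)$.

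For the horn statement, Proposition~\ref{P6.5} already does the bookkeeping: a morphism $z\in\Hom(\nu\Lambda_n^k,C)$ factors through $\nu\pi_n^k$ if and only if each face $z_i=z(\nu\d_i^\v)$ factors through $\nu\Pi_{n-1}^{k-1}$ for $0\le i<k-1$ and through $\nu\Pi_{n-1}^k$ for $k+1<i\le n$. By the inductive hypothesis applied to the element statement in dimension~$n-1$, these factorisation conditions on the $z_i$ are precisely the conditions that $z_i$ be $(k-1)$-complicial, respectively $k$-complicial, which by Definition~\ref{D4.1}(1) is exactly what it means for the horn $z$ to be $k$-complicial. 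The only subtlety is the base case and the edge indices: when $k-1=1$ or the ranges $0\le i<k-1$ and $k+1<i\le n$ are empty (small~$n$), the complicial conditions are vacuous and one checks $\Pi$ and $\pi$ are then identities, so the claim is trivially true; I would dispose of this at the start of the induction.

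For the element statement, Theorem~\ref{T7.9} says that for $y\in\Hom(\nu V_n^k,C)$ the composite $y(\nu\Pi_n^k)$ is the \emph{unique} thin filler of the $k$-horn represented by $y(\nu\pi_n^k)$. By the horn statement just proved (same~$n$), that horn is an arbitrary $k$-complicial horn, and conversely every $k$-complicial horn arises this way; hence $\{\,y(\nu\Pi_n^k):y\in\Hom(\nu V_n^k,C)\,\}$ is exactly the set of thin fillers of $k$-complicial horns, which is the definition of the $n$-dimensional $k$-complicial elements (Definition~\ref{D4.1}(2)). One must be a little careful that distinct $k$-complicial $n$-elements do not all collapse to the same thing: since a $k$-complicial element is determined by the $k$-complicial horn it fills together with thinness, and that horn is $y(\nu\pi_n^k)$ with $y$ recoverable as the restriction of the filler along $\nu V_n^k\subset\nu\Delta_n$ (using $\Pi_n^k|\Lambda_n^k=\pi_n^k$ and that $\Pi_n^k$ is a retraction), the correspondence is genuinely a bijection, though for the theorem as stated only the description of the image is needed.

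I expect the main obstacle to be purely organisational rather than mathematical: getting the induction loop correct so that the element statement in dimension~$n$ is deduced from the horn statement in dimension~$n$, which in turn rests on the element statement in dimension~$n-1$, without circularity, and simultaneously handling the several boundary regimes ($k=1$, $k=n-1$, $n$ small) where the recursive definitions bottom out and the morphisms $\pi$, $\Pi$ degenerate to identities. The genuinely substantive input --- that filling a $k$-complicial horn is represented by $\nu\Pi_n^k$ and that such fillers are unique --- has already been extracted in Section~\ref{S7}, so this theorem is the formal harvest of that work and the proof should be short once the inductive scaffolding is in place.
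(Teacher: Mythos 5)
Your proposal is correct and follows essentially the same route as the paper: a single induction on~$n$ in which Proposition~\ref{P6.5} (the consequence of Proposition~\ref{P6.4}) reduces the horn statement in dimension~$n$ to the element statement in dimension~$n-1$, and Theorem~\ref{T7.9} then converts the horn statement into the element statement in dimension~$n$. The extra care you take over base cases and the bijectivity of the correspondence is sound but not needed beyond what the paper records.
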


\begin{proof}
The proof is by induction on~$n$.

Suppose that $(n-1)$-dimensional elements are $(k-1)$-complicial if and only if they factor through $\nu\Pi_{n-1}^{k-1}$ and that they are $k$-complicial if and only if they factor through $\nu\Pi_{n-1}^k$. Then it follows from Proposition~\ref{P6.4} that an $(n-1)$-dimensional $k$-horn 
$$(z_0,\ldots,z_{k-1},z_{k+1},\ldots,z_n)$$
factors through $\nu\pi_n^k$ if and only if $z_i$~is $(k-1)$-complicial for $0\leq i<k-1$ and $z_i$~is $k$-complicial for $k+1<i\leq n$. This means that an $(n-1)$-dimensional $k$-horn factors through $\nu\pi_n^k$ if and only if it is $k$-complicial. It now follows from Theorem~\ref{T7.9} that an $n$-dimensional element is a thin filler of an $(n-1)$-dimensional $k$-complicial horn if and only if it factors through $\nu\Pi_n^k$; therefore an $n$-dimensional element is $k$-complicial if and only if it factors through $\nu\Pi_n^k$. This completes the proof.
\end{proof}

\begin{theorem} \label{T8.2}
If $C$~is an $\omega$-category then the nerve of~$C$ is a complicial set.
\end{theorem}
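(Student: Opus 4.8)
The plan is to verify the three conditions of Definition~\ref{D4.2}, using that $NC$ is already a stratified simplicial set by Proposition~\ref{P3.3}. Condition~(1) is exactly Proposition~\ref{P4.3}. For condition~(2), let $z$ be an $(n-1)$-dimensional $k$-complicial horn with $0<k<n$; by Theorem~\ref{T8.1} it is represented by a composite $y(\nu\pi_n^k)$ with $y\in\Hom(\nu V_n^k,C)$, and Theorem~\ref{T7.9} then says that $y(\nu\Pi_n^k)$ is its unique thin filler. So conditions (1) and (2) are immediate from what has already been established.

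The real content is condition~(3). Let $x$ be an $n$-dimensional $k$-complicial element with $0<k<n$ such that $\partial_{k-1}x$ and $\partial_{k+1}x$ are thin; I must show that $\partial_k x$ is thin. By Theorem~\ref{T8.1} we may write $x=y(\nu\Pi_n^k)$ with $y\in\Hom(\nu V_n^k,C)$. Since $x$ factors through $y$, the image of $\partial_k x=x(\nu\partial_k^\vee)$ is contained in the image of $y$; and by Theorem~\ref{T5.4} together with Proposition~\ref{P6.2} the image of $y$ is the $\omega$-subcategory of $C$ generated by the elements $y\langle e\rangle$, where $e$ runs over the prescribed basis elements of $V_n^k$.

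The key point is then that each generator $y\langle e\rangle$ is an identity for the operation $\comp_{n-2}$. Indeed, by Notation~\ref{N6.1} a prescribed basis element $e$ of $V_n^k$ omits $k-1$ or omits $k+1$, hence lies in $\partial_{k-1}^\vee\Delta_{n-1}$ or in $\partial_{k+1}^\vee\Delta_{n-1}$; correspondingly $\langle e\rangle$ equals $(\nu\partial_{k-1}^\vee)\langle c\rangle$ or $(\nu\partial_{k+1}^\vee)\langle c\rangle$ for a suitable atom $\langle c\rangle$ in $\nu\Delta_{n-1}$. Using Proposition~\ref{P6.4} one checks that $\partial_{k-1}x=y(\nu\partial_{k-1}^\vee)$ and $\partial_{k+1}x=y(\nu\partial_{k+1}^\vee)$, so $y\langle e\rangle$ is a value of $\partial_{k-1}x$ or of $\partial_{k+1}x$; since those faces are thin, Definition~\ref{D3.1} gives $d_{n-2}^-y\langle e\rangle=d_{n-2}^+y\langle e\rangle=y\langle e\rangle$. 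Applying Proposition~\ref{P2.1} with $m=n-2$ to the $\omega$-subcategory generated by these elements, we conclude that $d_{n-2}^-=d_{n-2}^+=\id$ on the whole image of $y$, and hence on every value of $\partial_k x$. By Definition~\ref{D3.1} this says exactly that $\partial_k x$ is thin, so condition~(3) holds and $NC$ is a complicial set.

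The step I expect to require the most care is the one in the last paragraph: one must match the prescribed basis of $V_n^k$ with the faces of $t_{k-1}$ and $t_{k+1}$, verify via Proposition~\ref{P6.4} that $\Pi_n^k$ restricts to the identity on those faces so that $\partial_{k\pm1}x=y(\nu\partial_{k\pm1}^\vee)$, and correctly invoke Theorem~\ref{T5.4} and Proposition~\ref{P6.2} to pass from a statement about the atoms $y\langle e\rangle$ to a statement about all of $\im y$. Once that bookkeeping is in place the argument is purely formal, and the only genuinely new idea is to deduce thinness of the middle face from an image argument rather than by a face-by-face computation.
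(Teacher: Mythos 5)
Your proof is correct and follows essentially the same route as the paper: conditions (1) and (2) are quoted from Proposition~\ref{P4.3} and Theorems \ref{T7.9} and~\ref{T8.1}, and condition (3) is verified by factoring $x$ through $\nu V_n^k$, reducing to the atoms of $\nu V_n^k$ via Theorem~\ref{T5.4}, Proposition~\ref{P6.2} and Proposition~\ref{P2.1}, and then using thinness of $\d_{k-1}x$ and $\d_{k+1}x$. The only (harmless) variation is that you route every basis element of $V_n^k$ through one of the faces $\d_{k-1}^\v\Delta_{n-1}$ or $\d_{k+1}^\v\Delta_{n-1}$, whereas the paper disposes of the basis elements of dimension below $n-1$ directly and only invokes the thin faces for $t_{k\pm1}$.
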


\begin{proof}
By Proposition~\ref{P3.3} the nerve is a stratified simplicial set. By Proposition~\ref{P4.3} every thin $1$-dimensional element in the nerve is degenerate. By Theorems \ref{T7.9} and~\ref{T8.1} complicial horns in the nerve have unique thin fillers. It remains to verify the condition of Definition \ref{D4.2}(3).

Let $x$ be an $n$-dimensional $k$-complicial element in the nerve such that $\d_{k-1}x$ and $\d_{k+1}x$ are thin; we must show that $\d_k x$ is thin. Equivalently, for each sign~$\alpha$, we must show that $d_{n-2}^\alpha x(\nu\d_k^\v)=x(\nu\d_k^\v)$. We will do this by showing that $d_{n-2}^\alpha x=x$. Since $x$~factors through the retraction $\nu\Pi_n^k\colon\nu\Delta_n\to\nu V_n^k$ it suffices to show that $d_{n-2}^\alpha xv =xv$ for all $v\in\nu V_n^k$. Since $\nu V_n^k$ is generated by its atoms (see Theorem~\ref{T5.4} and Proposition~\ref{P6.2}) it suffices by Proposition~\ref{P2.1} to show that $d_{n-2}^\alpha x\langle a\rangle=x\langle a\rangle$ for every basis element~$a$ in~$V_n^k$. For basis elements~$a$ of dimension less than $n-1$ this is immediate. For a basis element~$a$ of dimension $n-1$ we have 
$$x\langle a\rangle=x\langle t_{k\pm 1}\rangle
\in\im x(\nu\d_{k\pm 1}^\v)=\im\d_{k\pm 1}x,$$
hence $d_{n-2}^\alpha x\langle a\rangle=x\langle a\rangle$ because $\d_{k\pm 1}x$ is thin. This completes the proof.
\end{proof}

\section{Complicial sets and complicial identities} \label{S9}

We have shown in Theorem~\ref{T8.2} that the nerves of $\omega$-categories are complicial sets. In this section we will show that complicial sets are sets with complicial identities in the sense of the following definition (\cite{B4}, Definition~6.1).

\begin{definition} \label{D9.1}
A \emph{set with complicial identities} is a simplicial
set~$X$, together with \emph{wedges}
$$x\w_i y\in X_{m+1},$$
defined when $x,y\in X_m$ and $\d_i x=\d_{i+1}y$, such that the
following axioms hold.

\textup{(1)} If $x\w_i y$ is defined with $x,y\in X_m$, then
\begin{align*}
 &\d_j(x\w_i y)=\d_j x\w_{i-1}\d_j y\quad (0\leq j<i),\\
 &\d_i(x\w_i y)=y,\\
 &\d_{i+2}(x\w_i y)=x,\\
 &\d_j(x\w_i y)=\d_{j-1}x\w_i\d_{j-1}y\quad (i+2<j\leq m+1).
\end{align*}

\textup{(2)} If $x\in X_m$ and $0\leq i<m$ then
$$\e_i x=\e_i\d_{i+1}x\w_i x,\quad \e_{i+1}x=x\w_i\e_i\d_i x.$$

\textup{(3)} If $A$~is of the form $b\w_i(y\w_i z)$ then
$$A=(\d_{i+2}b\w_i y)\w_{i+1}\d_{i+1}A.$$

\textup{(4)} If $A$~is of the form $(x\w_i y)\w_{i+1}c$ then
$$A=\d_{i+2}A\w_i(y\w_i\d_i c).$$

\textup{(5)} The equality
$$[x\w_i\d_{i+1}(y\w_i z)]\w_i(y\w_i z)
 =(x\w_i y)\w_{i+1}[\d_{i+1}(x\w_i y)\w_i z]$$
holds whenever either side is defined.

\textup{(6)} If $A$~is of the form
$\d_{i+2}[(x\w_{i+1}y)\w_{i+1}(y\w_i z)]$, then the
equality
$$A\w_i(w\w_{i+1}\d_i A)=(\d_{i+3}A\w_i w)\w_{i+2}A$$
holds whenever either side is defined.

\textup{(7)} If $i\leq j-3$ then the equality
$$(x\w_i y)\w_j(z\w_i w)=(x\w_{j-1}z)\w_i(y\w_{j-1}w)$$
holds whenever either side is defined.
\end{definition}

The object of this section is to prove the following result.

\begin{theorem} \label{T9.2}
Let $X$ be a complicial set. Then there is a unique family of wedge operations making~$X$ into a set with complicial identities such that the wedges $x\w_i y$ are $(i+1)$-complicial elements.
\end{theorem}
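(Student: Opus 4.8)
The plan is to define the wedge operations by induction on dimension, identifying each wedge with the unique thin filler of a complicial horn assembled from its two arguments, and then to deduce all seven axioms of Definition~\ref{D9.1} from the uniqueness of thin fillers for complicial horns.

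For the construction, fix $m\geq 1$ and suppose that every wedge whose value lies in some $X_{p+1}$ with $p<m$ has already been defined (the base case $m=1$ being immediate, since every $1$-dimensional $1$-horn is $1$-complicial). Let $x,y\in X_m$ with $\d_i x=\d_{i+1}y$, where $0\leq i<m$. Using the simplicial identities together with $\d_i x=\d_{i+1}y$ one checks the common-face equalities $\d_{i-1}\d_j x=\d_i\d_j y$ for $j<i$ and $\d_i\d_{j-1}x=\d_{i+1}\d_{j-1}y$ for $j>i+2$, so the elements
\begin{align*}
&z_i=y,\qquad z_{i+2}=x,\\
&z_j=\d_j x\w_{i-1}\d_j y\quad (0\leq j<i),\qquad z_j=\d_{j-1}x\w_i\d_{j-1}y\quad (i+2<j\leq m+1)
\end{align*}
are all defined. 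They form an $m$-dimensional $(i+1)$-horn: the compatibility relations $\d_p z_q=\d_{q-1}z_p$ reduce, by axiom~(1) for the lower-dimensional wedges and the simplicial identities, to $\d_i x=\d_{i+1}y$ and to relations already established. Moreover this horn is $(i+1)$-complicial, since $z_j$ is $i$-complicial for $j<i$ and $(i+1)$-complicial for $j>i+2$, being wedges of lower dimension. By Definition~\ref{D4.2}(2) the horn has a unique thin filler, and I take $x\w_i y$ to be that filler; by Definition~\ref{D4.1}(2) it is an $(i+1)$-complicial element, and it satisfies axiom~(1) because its codimension-one faces other than the $(i+1)$st are precisely the $z_j$. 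Uniqueness of the entire family of wedges is then immediate: any family satisfying axiom~(1) and taking $(i+1)$-complicial (hence thin) values must, by the same induction, send $(x,y)$ to a thin filler of the very horn just described, and such a filler is unique.

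Axiom~(2) I would carry along in the same induction: $\e_i x$ and $\e_{i+1}x$ are thin, and a direct computation of their codimension-one faces---using axiom~(1) and axiom~(2) for wedges of dimension $m-1$---shows that they fill the $(i+1)$-complicial horns associated with $\e_i\d_{i+1}x\w_i x$ and $x\w_i\e_i\d_i x$; uniqueness of thin fillers gives the two equalities. The remaining axioms~(3)--(7) are then handled by the same device. Each is an equation whose two sides are both $k$-complicial for one and the same $k\in\{i+1,i+2\}$, being iterated wedges, so it suffices to show that the two sides are fillers of a single complicial horn; one computes the codimension-one faces of each side by repeated use of axiom~(1) and the simplicial identities, matches the resulting horn components, and recognises the horn as one already known to be complicial. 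For the conditional axioms~(5),~(6) and~(7) one must also verify that the common-face hypothesis making one side defined is equivalent to the one making the other side defined, which again follows from axiom~(1) and the simplicial identities. Condition~(3) of Definition~\ref{D4.2} is invoked at the places where a horn is complicial only once a particular face has been shown to be thin.

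I expect the verification of axioms~(3)--(7) to be the main obstacle, and within it axioms~(5) and~(6) the hardest: these involve $\w_i$, $\w_{i+1}$ and $\w_{i+2}$ simultaneously, so the horns to be compared have many components, and keeping track of all the face identities together with the definedness conditions is where essentially all of the combinatorial work lies. By contrast the inductive construction of the wedges and the derivation of axiom~(1) are formal once the horn $(z_j)$ above has been written down.
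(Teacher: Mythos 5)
Your proposal follows essentially the same route as the paper: the wedge $x\w_i y$ is constructed by induction on dimension as the unique thin filler of the $(i+1)$-complicial horn assembled from the faces of $x$ and $y$ (the paper's Lemma~\ref{L9.3}), and axioms (2)--(7) are verified by showing that both sides of each identity are thin fillers of one and the same complicial horn, using axiom (1) to compute faces and Definition~\ref{D4.2}(3) to supply the needed thinness (the paper packages this comparison as Lemmas \ref{L9.4} and~\ref{L9.5}). One small imprecision: for axioms (3) and (4) the two sides are not a priori $k$-complicial for the same $k$ (one is a $\w_i$-wedge, the other a $\w_{i+1}$-wedge); what actually makes the comparison work is that a thin element whose codimension-one faces are suitable lower wedges is automatically complicial for the relevant index and hence equal to the corresponding wedge of its faces, which is exactly the content of the paper's Lemma~\ref{L9.4} and is implicit in your phrase ``recognises the horn as one already known to be complicial.''
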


Since complicial elements in complicial sets are uniquely determined by their faces, the construction of Theorem~\ref{T9.2} is functorial.

The proof of Theorem~\ref{T9.2} uses the method appearing in Street's notes~\cite{B6}. It consists of a sequence of lemmas. In all of these lemmas, $X$~is a complicial set.

\begin{lemma} \label{L9.3}
There is a unique way to construct wedges
$$x\w_i y\in X_{m+1}$$
such that $x\w_i y$ is defined for $x,y\in X_m$ with $\d_i x=\d_{i+1}y$, such that $x\w_i y$ is $(i+1)$-complicial, and such that the conditions of Definition~\ref{D9.1}(1) are satisfied.
\end{lemma}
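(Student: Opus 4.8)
The plan is to define the wedges by induction on the dimension $m+1$ of $x\w_i y$, obtaining each wedge as the unique thin filler of a suitable complicial horn via Definition~\ref{D4.2}(2), so that the face formulas of Definition~\ref{D9.1}(1) hold by construction. Suppose the wedges have been constructed for all pairs of elements of dimension less than~$m$, are complicial of the stated index, and satisfy Definition~\ref{D9.1}(1). Fix $x,y\in X_m$ with $\d_i x=\d_{i+1}y$ and $0\le i<m$, so that $k=i+1$ satisfies $0<k<m+1$. The formulas of Definition~\ref{D9.1}(1) prescribe the required faces $z_j=\d_j(x\w_i y)$ for $j\ne i+1$: namely $z_j=\d_j x\w_{i-1}\d_j y$ for $0\le j<i$, then $z_i=y$, then $z_{i+2}=x$, and $z_j=\d_{j-1}x\w_i\d_{j-1}y$ for $i+2<j\le m+1$. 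The wedges occurring here are of dimension~$m$, hence already available by induction once one checks the common-face conditions $\d_{i-1}\d_j x=\d_i\d_j y$ for $j<i$ and $\d_i\d_{j-1}x=\d_{i+1}\d_{j-1}y$ for $j>i+2$; these are short calculations with the simplicial identities starting from $\d_i x=\d_{i+1}y$. Let $z=(z_0,\ldots,z_i,z_{i+2},\ldots,z_{m+1})$.

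The construction then proceeds in three steps. First, show that $z$ is an $m$-dimensional $(i+1)$-horn, i.e.\ that $\d_a z_b=\d_{b-1}z_a$ whenever $a<b$ and $a,b\ne i+1$. This is a case analysis according to how $a$ and $b$ sit relative to $i$, $i+1$ and $i+2$; in the cases where $z_a$ or $z_b$ is $x$ or $y$ the identity is immediate, and in the remaining cases one expands both sides using the four face formulas of Definition~\ref{D9.1}(1) for the dimension-$m$ wedges (available by induction) and reconciles them with the simplicial identities, everything ultimately resting on $\d_i x=\d_{i+1}y$. Second, observe that $z$ is $(i+1)$-complicial: by induction $z_j=\d_j x\w_{i-1}\d_j y$ is $i$-complicial for $j<i$ and $z_j=\d_{j-1}x\w_i\d_{j-1}y$ is $(i+1)$-complicial for $j>i+2$, which is exactly the requirement of Definition~\ref{D4.1}(1) with $k=i+1$. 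Third, apply Definition~\ref{D4.2}(2) to get the unique thin filler $w$ of $z$, and set $x\w_i y=w$. Then $w$ is an $(i+1)$-complicial element by Definition~\ref{D4.1}(2), and $\d_j w=z_j$ for $j\ne i+1$ is precisely Definition~\ref{D9.1}(1); this completes the inductive step for existence.

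For uniqueness, let $w'$ be any $(i+1)$-complicial element satisfying Definition~\ref{D9.1}(1). As an $(i+1)$-complicial element, $w'$ is a thin filler of an $(i+1)$-complicial horn, hence thin, and its faces $\d_j w'$ for $j\ne i+1$ are forced by Definition~\ref{D9.1}(1), together with the inductively determined lower-dimensional wedges, to be the $z_j$ above. Thus $w'$ is a thin filler of $z$, so $w'=w$ by the uniqueness clause of Definition~\ref{D4.2}(2). Hence the construction is forced at every stage and the family of wedges is unique.

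I expect the main obstacle to be the first step, verifying that $z$ is a genuine horn. The case analysis is long, and in the cases where neither $a$ nor $b$ lies in $\{i,i+2\}$ one must keep careful track of the index shifts in the four clauses of Definition~\ref{D9.1}(1) and apply them to the correct lower-dimensional wedge before collapsing the two sides by means of the simplicial identities; the boundary cases ($j=i-1$, $j=i+3$, and $a$ or $b$ equal to $i$ or $i+2$) require separate though easier attention. The remaining steps are direct appeals to the complicial-set axioms.
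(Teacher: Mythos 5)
Your proposal is correct and follows essentially the same route as the paper: define $x\w_i y$ by induction on $m$ as the unique thin filler of the $(i+1)$-complicial horn whose faces are prescribed by Definition~\ref{D9.1}(1), with the horn and complicial conditions verified from the inductive hypothesis and the simplicial identities. The only difference is that you spell out slightly more of the bookkeeping (the common-face conditions needed for the lower-dimensional wedges to exist), which the paper leaves as ``straightforward to verify.''
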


\begin{proof}
The proof is by induction on~$m$. Let $x,y\in X_m$ be such that $\d_i x=\d_{i+1}y$ and assume that we have constructed wedges with the required properties for pairs of elements with dimensions less than~$m$. We will show that there is an $m$-dimensional $(i+1)$-complicial horn
$$(z_0,\ldots,z_i,z_{i+2},\ldots,z_{m+1})$$
given by
\begin{align*}
&z_j=\d_j x\w_{i-1}\d_j y\quad (0\leq j<i),\\
&z_i=y,\\
&z_{i+2}=x,\\
&z_j=\d_{j-1}x\w_i\d_{j-1}y\quad (i+2<j\leq m+1).
\end{align*}
The conditions on $x\w_i y$ amount to requiring it to be a thin filler of this horn. Since complicial horns in~$X$ have unique thin fillers, this means that there will be a unique wedge $x\w_i y$ with the required properties.

It remains to show that $(z_0,\ldots,z_{m+1})$ really is an $m$-dimensional $(i+1)$-complicial horn. It is straightforward to verify that $\d_k z_j=\d_{j-1}z_k$ for $k<j$ by using Definition~\ref{D9.1}(1) inductively. It also follows from the inductive hypothesis that $z_j$~is $i$-complicial for $j<i$ and that $z_j$~is $(i+1)$-complicial for $j>i+2$. Therefore $(z_0,\ldots,z_{m+1})$ is indeed an $m$-dimensional $(i+1)$-complicial horn. This completes the proof.
\end{proof}

We must now verify the conditions of Definition \ref{D9.1}(2)--(7). Essentially, we need to show that elements of certain forms are wedges. We will write $\im\w_i$ for the subset of~$X$ consisting of the wedges $x\w_i y$ and we will use the following result.

\begin{lemma} \label{L9.4}
Let $A$ be a thin $(m+1)$-dimensional element of~$X$ such that $\d_j A\in\im\w_{i-1}$ for $0\leq j<i$ and such that $\d_j A\in\im\w_i$ for $i+2<j\leq m+1$. Then
$$A=\d_{i+2}A\w_i\d_i A.$$
\end{lemma}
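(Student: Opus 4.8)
The plan is to identify $A$ as the unique thin filler of an $(i+1)$-complicial horn whose $i$-face is $\d_i A$ and whose $(i+2)$-face is $\d_{i+2}A$, since by Lemma~\ref{L9.3} the wedge $\d_{i+2}A\w_i\d_i A$ is exactly that unique thin filler. So I want to build the horn
$$
(z_0,\ldots,z_i,z_{i+2},\ldots,z_{m+1})
$$
with $z_i=\d_i A$, $z_{i+2}=\d_{i+2}A$, $z_j=\d_j A$ for $0\le j<i$ and $z_j=\d_j A$ for $i+2<j\le m+1$; in other words the horn is simply the family of codimension-one faces of $A$ with $\d_{i+1}A$ deleted. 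The compatibility relations $\d_kz_j=\d_{j-1}z_k$ for $k<j$ are immediate from the simplicial identities applied to $A$, so $(z_0,\ldots,z_{m+1})$ is certainly an $m$-dimensional $(i+1)$-horn.

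The content is to check that this horn is $(i+1)$-complicial, i.e.\ that $z_j=\d_j A$ is $i$-complicial for $0\le j<i$ and is $(i+1)$-complicial for $i+2<j\le m+1$. Here I would invoke the hypotheses: $\d_j A\in\im\w_{i-1}$ for $0\le j<i$ and $\d_j A\in\im\w_i$ for $i+2<j\le m+1$, together with Lemma~\ref{L9.3}, which says precisely that every element of $\im\w_{i-1}$ is $i$-complicial and every element of $\im\w_i$ is $(i+1)$-complicial. This gives the complicial condition on the horn for free. Since $A$ itself is thin and is a filler of this horn (it restricts to $z_j$ on each face $j\ne i+1$), $A$ is a thin filler; and the wedge $\d_{i+2}A\w_i\d_i A$ is also a thin filler of the very same horn by construction in Lemma~\ref{L9.3}. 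By uniqueness of thin fillers for complicial horns in a complicial set (Definition~\ref{D4.2}(2)), the two agree, which is the desired equality.

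The only real subtlety, and the step I expect to need the most care, is matching up the indices in the horn built here with the indices in the horn used to define the wedge in Lemma~\ref{L9.3}: there, with $x=\d_{i+2}A$ and $y=\d_i A$, one has $z_j=\d_j x\w_{i-1}\d_j y$ for $j<i$, $z_i=y=\d_i A$, $z_{i+2}=x=\d_{i+2}A$, and $z_j=\d_{j-1}x\w_i\d_{j-1}y$ for $i+2<j\le m+1$. One must check that these coincide with $\d_j A$; for $j<i$ this is $\d_{i+1}(\d_j A)=\d_j\d_{i+2}A$-type bookkeeping combined with the fact (forced by $A$ thin and $\d_j A$ a wedge) that $\d_j A$ is determined by its own faces, and similarly for $j>i+2$. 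This is exactly the kind of computation that Definition~\ref{D9.1}(1), applied inductively, is designed to handle, so I would carry it out by citing Definition~\ref{D9.1}(1) together with the simplicial identities for $A$, rather than expanding everything by hand.
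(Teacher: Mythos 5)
Your proposal is correct and follows essentially the same route as the paper: both arguments exhibit $A$ and the wedge $\d_{i+2}A\w_i\d_i A$ as thin fillers of the same $(i+1)$-complicial horn (using the hypotheses plus Lemma~\ref{L9.3} to see that the horn is complicial and that $A$ is $(i+1)$-complicial) and then invoke uniqueness of thin fillers. The ``bookkeeping'' you defer is exactly the computation the paper carries out, namely recovering the components of each wedge $\d_j A$ as its $(i\pm1)$-faces via Definition~\ref{D9.1}(1) and matching them with $\d_j\d_{i+2}A$ and $\d_j\d_i A$ by the simplicial identities.
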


\begin{proof}
There is a wedge $\d_{i+2}A\w_i\d_i A$, because $\d_i\d_{i+2}A=\d_{i+1}\d_i A$, and this wedge is an $(i+1)$-complicial element. The element~$A$ is also $(i+1)$-complicial, because elements of $\im\w_{i-1}$ are $i$-complicial and elements of $\im\w_i$ are $(i+1)$-complicial. Since $(i+1)$-complicial elements in the complicial set~$X$ are uniquely determined by the $(i+1)$-horns which they fill, it suffices to prove that 
$$\d_j A=\d_j(\d_{i+2}A\w_i\d_i A)\quad (j\neq i+1).$$
We use the conditions of Definition \ref{D9.1}(1): for $0\leq j<i$ we have $\d_j A=x_j\w_{i-1}y_j$ such that
\begin{align*}
&x_j=\d_{i+1}(x_j\w_{i-1}y_j)=\d_{i+1}\d_j A=\d_j\d_{i+2}A,\\
&y_j=\d_{i-1}(x_j\w_{i-1}y_j)=\d_{i-1}\d_j A=\d_j\d_i A,
\end{align*}
hence 
$$\d_j A=\d_j\d_{i+2}A\w_{i-1}\d_j\d_i A=\d_j(\d_{i+2}A\w_i\d_i A);$$
we certainly have 
\begin{align*}
&\d_i(\d_{i+2}A\w_i\d_i A)=\d_i A,\\
&\d_{i+2}(\d_{i+2}A\w_i\d_i A)=\d_{i+2} A;
\end{align*}
for $i+2<j\leq m+1$ we have $\d_j A=x_j\w_i y_j$ such that
\begin{align*}
&x_j=\d_{i+2}(x_j\w_i y_j)=\d_{i+2}\d_j A=\d_{j-1}\d_{i+2}A,\\
&y_j=\d_i(x_j\w_i y_j)=\d_i\d_j A=\d_{j-1}\d_i A,
\end{align*}
hence
$$\d_j A=\d_{j-1}\d_{i+2}A\w_i\d_{j-1}\d_i A=\d_j(\d_{i+2}A\w_i\d_i A).$$
This completes the proof.
\end{proof}

When applying this result, we need to show that certain elements are thin. We will use the following result.

\begin{lemma} \label{L9.5}
Let $x\w_i y$ be a wedge in~$X$; then $x\w_i y$ is thin. If also $x$~and~$y$ are thin then $\d_{i+1}(x\w_i y)$ is thin.
\end{lemma}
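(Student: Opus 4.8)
The plan is to obtain both assertions directly from the construction of $x\w_i y$ in Lemma~\ref{L9.3} together with condition (3) of Definition~\ref{D4.2}. Recall that $x\w_i y$ is by definition the thin filler of the $(i+1)$-complicial horn whose faces are listed in Lemma~\ref{L9.3}; in particular $x\w_i y$ is an $(i+1)$-complicial element and is therefore a \emph{thin} filler, so the first assertion is immediate from the construction.

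For the second assertion I would invoke Definition \ref{D4.2}(3) with $n=m+1$ and $k=i+1$. We already know $A=x\w_i y$ is an $(i+1)$-complicial $(m+1)$-dimensional element, so it suffices to check that $\d_{k-1}A=\d_i(x\w_i y)$ and $\d_{k+1}A=\d_{i+2}(x\w_i y)$ are both thin; then Definition \ref{D4.2}(3) forces $\d_k A=\d_{i+1}(x\w_i y)$ to be thin, which is exactly what is claimed. But by Definition~\ref{D9.1}(1) (established as part of Lemma~\ref{L9.3}) we have $\d_i(x\w_i y)=y$ and $\d_{i+2}(x\w_i y)=x$, so the hypothesis that $x$~and~$y$ are thin is precisely the hypothesis needed to apply Definition \ref{D4.2}(3).

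The only real point to watch is the edge case $i=0$, where $k-1=0$ and one must make sure the formula $\d_k A=\d_{i+1}A$ still makes sense and that $A$ qualifies as a $k$-complicial element for $k=1$; here $1$-complicial just means thin (cf.\ the remark after Definition~\ref{D4.1}), which is the first assertion, so there is no difficulty. Likewise the condition $0<k<n$ required by Definition \ref{D4.2}(3) reads $0<i+1<m+1$, i.e.\ $0\le i<m$, which is the standing hypothesis on wedges. I expect no substantive obstacle: the entire argument is a matter of matching indices in Definition \ref{D4.2}(3) against the face formulas of Definition~\ref{D9.1}(1), with the $(i+1)$-compliciality of $x\w_i y$ supplied for free by Lemma~\ref{L9.3}.
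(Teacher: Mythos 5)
Your proposal is correct and follows the paper's own argument exactly: the first assertion is immediate from the construction of $x\w_i y$ as an $(i+1)$-complicial (hence thin) filler in Lemma~\ref{L9.3}, and the second follows by applying Definition~\ref{D4.2}(3) with $k=i+1$, using $\d_i(x\w_i y)=y$ and $\d_{i+2}(x\w_i y)=x$ from Definition~\ref{D9.1}(1). The index-range check you add is harmless but not needed beyond what the paper already assumes.
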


\begin{proof} 
By construction, $x\w_i y$ is an $(i+1)$-complicial element; therefore $x\w_i y$ is thin. If $x$~and~$y$ are thin then $\d_i(x\w_i y)$ and $\d_{i+2}(x\w_i y)$ are the thin elements $y$~and~$x$, and it follows from Definition \ref{D4.2}(3) that $\d_{i+1}(x\w_i y)$ is thin.
\end{proof}

We will now verify the conditions of Definition \ref{D9.1}(2)--(7).

\begin{lemma} \label{L9.6}
The wedges in~$X$ satisfy the conditions of Definition \ref{D9.1}(2)--(4).
\end{lemma}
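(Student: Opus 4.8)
The plan is to verify conditions (2), (3), (4) of Definition~\ref{D9.1} one at a time, each time reducing the claim to Lemma~\ref{L9.4} by checking the hypotheses on faces and thinness. The common strategy is: to prove an identity of the shape $A = p \w_i q$, first observe that both sides are $(i+1)$-complicial elements of the complicial set~$X$, so by uniqueness of complicial fillers it is enough to check that $A$ is thin, that the relevant faces of $A$ lie in $\im\w_{i-1}$ and $\im\w_i$ as required by Lemma~\ref{L9.4}, and that the faces $\d_j A$ for $j\neq i+1$ match $\d_j(p\w_i q)$; but Lemma~\ref{L9.4} already packages exactly this, giving $A = \d_{i+2}A\w_i\d_i A$, so one only needs to identify $\d_{i+2}A$ with $p$ and $\d_i A$ with $q$ using Definition~\ref{D9.1}(1).

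First I would dispose of (2). For $\e_i x$ with $0\le i<m$: this is a degenerate element, hence thin, and its faces are computed by the simplicial identities — $\d_j\e_i x = \e_{i-1}\d_j x$ for $j<i$ (which lies in $\im\w_{i-1}$ by a downward induction or by part~(2) itself in lower dimension), $\d_j\e_i x$ for $j>i+2$ lies in $\im\w_i$ similarly, $\d_i\e_i x = x$ and $\d_{i+2}\e_i x = \e_i\d_{i+1}x$. Applying Lemma~\ref{L9.4} gives $\e_i x = \e_i\d_{i+1}x \w_i x$; the case $\e_{i+1}x = x\w_i\e_i\d_i x$ is symmetric, using $\d_{i+1}\e_{i+1}x = x$ and $\d_i\e_{i+1}x = \e_i\d_i x$. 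For (3): write $A = b\w_i(y\w_i z)$, which is $(i+1)$-complicial and therefore thin by Lemma~\ref{L9.5}. I want to apply Lemma~\ref{L9.4} at level $i+1$ rather than~$i$, i.e.\ show $A = \d_{i+3}A\w_{i+1}\d_{i+1}A$, which after identifying $\d_{i+3}A = \d_{i+2}b\w_i y$ (via Definition~\ref{D9.1}(1), since $\d_{i+3}A = \d_{i+2}b\w_i\d_{i+2}(y\w_i z) = \d_{i+2}b\w_i y$) yields precisely the stated identity. The needed face conditions for Lemma~\ref{L9.4} at level $i+1$ are checked by expanding $\d_j A$ using Definition~\ref{D9.1}(1) and the inductive structure of wedges. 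Condition~(4) is dual to~(3), applying Lemma~\ref{L9.4} at level~$i$ to $A = (x\w_i y)\w_{i+1}c$.

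The main obstacle I anticipate is bookkeeping: in each case one must carefully confirm that the relevant faces $\d_j A$ genuinely lie in $\im\w_{i-1}$ or $\im\w_i$ — not merely that they have the right combinatorial shape, but that they arise as honest wedges — and that the thinness hypothesis of Lemma~\ref{L9.4} holds. Thinness of the outer wedges is handled uniformly by Lemma~\ref{L9.5}, but for~(3) and~(4) one is applying Lemma~\ref{L9.4} ``one level up'' (at $i+1$ instead of~$i$), so the face bookkeeping has to be reindexed accordingly and the appropriate faces must be recognized as elements of $\im\w_i$ and $\im\w_{i+1}$. None of this requires any new idea beyond Lemmas~\ref{L9.4} and~\ref{L9.5} together with Definition~\ref{D9.1}(1); it is a matter of organizing the three verifications so that each reduces cleanly to an application of Lemma~\ref{L9.4} with the correct indices.
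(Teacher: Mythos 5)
Your proposal is correct and follows essentially the same route as the paper: induction on dimension, thinness via degeneracy or Lemma~\ref{L9.5}, face computations via Definition~\ref{D9.1}(1), the inductive hypothesis to recognise the outer faces as wedges of the right type, and then Lemma~\ref{L9.4} applied at level~$i$ for (2) and (4) and at level~$i+1$ for (3). The only quibble is the harmless index slip in (2), where the face relevant to Lemma~\ref{L9.4} is $\d_{i+2}\e_{i+1}x=x$ rather than $\d_{i+1}\e_{i+1}x=x$.
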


\begin{proof}
In each case we proceed by induction on dimension and apply Lemma~\ref{L9.4}. We use Lemma~\ref{L9.5} to recognise thin elements and Definition \ref{D9.1}(1) to compute faces.

(2) Suppose that $x\in X_m$ and and that $0\leq i<m$. The elements $\e_i x$ and $\e_{i+1}x$ are thin because they are degenerate (Definition~\ref{D3.2}). For $0\leq j<i$ it follows from the inductive hypothesis that
$$\d_j\e_i x=\e_{i-1}\d_j x\in\im\w_{i-1},\quad
\d_j\e_{i+1}x=\e_i\d_j x\in\im\w_{i-1};$$
for $i+2<j\leq m+1$ it similarly follows that
$$\d_j\e_i x=\e_i\d_{j-1}x\in\im\w_i,\quad
\d_j\e_{i+1}x=\e_{i+1}\d_{j-1}x\in\im\w_i.$$
By Lemma~\ref{L9.4},
\begin{align*}
&\e_i x=\d_{i+2}\e_i x\w_i\d_i\e_i x=\e_i\d_{i+1}x\w_i x,\\
&\e_{i+1}x=\d_{i+2}\e_{i+1}x\w_i\d_i\e_{i+1}x=x\w_i\e_i\d_i x.
\end{align*}

(3) Let $A$ be an element of the form $b\w_i(y\w_i z)$. Then $A$~is thin because $A$~is a wedge. For $j<i$ it follows from the inductive hypothesis that
$$\d_j A=\d_j b\w_{i-1}(\d_j y\w_{i-1}\d_j z)\in\im\w_i,$$
we certainly have
$$\d_i A=y\w_i z\in\im\w_i,$$
and for $j>i+3$ it follows from the inductive hypothesis that
$$\d_j A=\d_{j-1}b\w_i(\d_{j-2}y\w_i\d_{j-2}z)\in\im\w_{i+1};$$
therefore 
$$A=\d_{i+3}A\w_{i+1}\d_{i+1}A=(\d_{i+2}b\w_i y)\w_{i+1}\d_{i+1}A.$$

(4) Let $A$ be an element of the form $(x\w_i y)\w_{i+1}c$. Then $A$~is thin because $A$~is a wedge. For $j<i$ it follows from the inductive hypothesis that
$$\d_j A=(\d_j x\w_{i-1}\d_j y)\w_i\d_j c\in\im\w_{i-1},$$
we certainly have
$$\d_{i+3}A=x\w_i y\in\im\w_i,$$
and for $j>i+3$ it follows from the inductive hypothesis that
$$\d_j A=(\d_{j-2}x\w_i\d_{j-2}y)\w_{i+1}\d_{j-1}c\in\im\w_i;$$
therefore 
$$A=\d_{i+2}A\w_i\d_i A=\d_{i+2}A\w_i(y\w_i\d_i c).$$
\end{proof}

\begin{lemma} \label{L9.7}
The wedges in~$X$ satisfy the conditions of Definition \ref{D9.1}(5).
\end{lemma}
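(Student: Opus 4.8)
The plan is to verify condition (5) of Definition~\ref{D9.1} by the same technique used in Lemma~\ref{L9.6}: realise both sides as $(i+2)$-complicial thin fillers of the same horn, using Lemma~\ref{L9.4} (or rather the analogous identity one level up) to recognise them. Write $P=x\w_i\d_{i+1}(y\w_i z)$ and $Q=\d_{i+1}(x\w_i y)\w_i z$; the claim is that $P\w_i(y\w_i z)$ equals $(x\w_i y)\w_{i+1}Q$ whenever either side is defined. Both sides are $(m+2)$-dimensional if $x,y,z\in X_m$. The left side is a wedge $\w_i$, hence thin and $(i+1)$-complicial by Lemma~\ref{L9.5}; the right side is a wedge $\w_{i+1}$, hence thin and $(i+2)$-complicial. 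So neither side is literally of the form to which Lemma~\ref{L9.4} applies directly with the same index, and the first step will be to check that both sides are in fact $(i+2)$-complicial. The left side $P\w_i(y\w_i z)$ has $\d_{i+1}$ equal to $\d_{i+1}(x\w_i\cdots)$; since its $\d_i$ face is $y\w_i z$ (an $(i+1)$-complicial element) and its $\d_{i+2}$ face is $P=x\w_i\d_{i+1}(y\w_i z)$ (also $(i+1)$-complicial), Definition~\ref{D4.1} shows it is $(i+2)$-complicial once we know those two faces are thin; thinness of $y\w_i z$ and $P$ follows from Lemma~\ref{L9.5}. A parallel argument handles the right side.

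Having established that both sides are $(i+2)$-complicial thin elements, and since $(i+2)$-complicial elements in a complicial set are determined by the $(i+2)$-horn they fill, it suffices to show that the faces $\d_j$ agree for all $j\neq i+2$. I would proceed by induction on~$m$, using Definition~\ref{D9.1}(1) to expand each face of a $\w_i$ or a $\w_{i+1}$ in terms of lower-dimensional wedges, and then invoking the inductive hypothesis (the case of condition~(5) in dimension $m-1$) together with Lemma~\ref{L9.6} on those lower-dimensional pieces. Concretely: for $j<i$ the face $\d_j$ of each side is, by Definition~\ref{D9.1}(1), a nested wedge in $\d_j x,\d_j y,\d_j z$ with all indices shifted down by one, and the two reduce to the $(m-1)$-dimensional instance of~(5). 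The faces $\d_i$, $\d_{i+1}$, $\d_{i+3}$ of each side are among $y\w_i z$, $x\w_i y$, $\d_{i+1}(x\w_i y)\w_i z$, $x\w_i\d_{i+1}(y\w_i z)$ and short composites thereof, and matching them requires Definition~\ref{D9.1}(1)–(4): in particular the $\d_{i+1}$-face computation will need condition~(3) or~(4) to rewrite an iterated wedge. For $j>i+3$ one again shifts indices and applies the inductive hypothesis.

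The main obstacle will be the bookkeeping around the middle faces $\d_{i+1}$ and $\d_{i+3}$, where the two sides do not visibly coincide and one must deploy the associativity-type identities~(3) and~(4) of Definition~\ref{D9.1}, already verified in Lemma~\ref{L9.6}, to bridge the gap; getting the index arithmetic right there (noting that $\d_{i+1}$ of a $\w_{i+1}$ and $\d_{i+2}$ of a $\w_i$ both return the ``$y$-slot'') is the delicate point. A secondary subtlety is the phrase ``whenever either side is defined'': one must check that definedness of $P\w_i(y\w_i z)$ (which requires $\d_i P=\d_{i+1}(y\w_i z)$) is equivalent to definedness of $(x\w_i y)\w_{i+1}Q$ (which requires $\d_{i+1}(x\w_i y)=\d_{i+2}Q$), and both in turn reduce to the single hypothesis $\d_i x=\d_{i+1}y$, $\d_i y=\d_{i+1}z$ via Definition~\ref{D9.1}(1); this compatibility check should be recorded before the face-matching argument begins.
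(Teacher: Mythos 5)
Your overall strategy --- identify both sides as thin fillers of a common complicial horn --- is the right family of ideas, but the specific reduction you propose has a genuine gap. You want to show that both sides are $(i+2)$-complicial and then invoke uniqueness of thin fillers of $(i+2)$-complicial horns. The right side, being a $\w_{i+1}$-wedge, is indeed $(i+2)$-complicial by construction. But the left side $P\w_i(y\w_i z)$ is only known to be $(i+1)$-complicial, and your argument for upgrading it misreads Definition~\ref{D4.1}: being $(i+2)$-complicial is a condition on the whole family of faces $\d_j$ for $j\leq i$ (they must be $(i+1)$-complicial) and for $j>i+3$ (they must be $(i+2)$-complicial), not merely on $\d_i$ and $\d_{i+2}$ being thin and $(i+1)$-complicial. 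For $j<i$ the face $\d_j[P\w_i(y\w_i z)]=\d_j P\w_{i-1}\d_j(y\w_i z)$ is a $\w_{i-1}$-wedge, hence $i$-complicial, and there is no reason for it to be $(i+1)$-complicial. So the left side does not visibly fill an $(i+2)$-complicial horn, and the uniqueness argument you appeal to cannot get started in the form you describe.

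The paper avoids this by first applying the already-proved identity (3) of Definition~\ref{D9.1} to the left side $A=b\w_i(y\w_i z)$ with $b=x\w_i\d_{i+1}(y\w_i z)$, rewriting it as the honest $\w_{i+1}$-wedge $(x\w_i y)\w_{i+1}\d_{i+1}A$. This reduces the whole lemma to the single identity $\d_{i+1}A=\d_{i+1}(x\w_i y)\w_i z$, which is then proved by the standard induction through Lemma~\ref{L9.4}: one checks that $\d_{i+1}A$ is thin (via Lemma~\ref{L9.5}, since $b$ and $y\w_i z$ are thin) and that its faces $\d_j$ lie in $\im\w_{i-1}$ for $j<i$ and in $\im\w_i$ for $j>i+2$. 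You do gesture at using (3) or (4) for ``the $\d_{i+1}$-face computation'', which is the right instinct, but this identification of $\d_{i+1}A$ is the substantive content of the proof and is absent from your outline. Your preliminary observation that both sides are defined under the same hypotheses $\d_i x=\d_{i+1}y$ and $\d_i y=\d_{i+1}z$ is correct and does agree with the paper.
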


\begin{proof}
Note that the conditions for the two sides of Definition \ref{D9.1}(5) to be defined are the same, namely $\d_i x=\d_{i+1}y$ and $\d_i y=\d_{i+1}z$. We may therefore assume that the left side is defined. Let
$$A=b\w_i(y\w_i z)$$
with
$$b=x\w_i\d_{i+1}(y\w_i z);$$
we must show that
$$A=(x\w_i y)\w_{i+1}[\d_{i+1}(x\w_i y)\w_i z].$$
But it follows from Definition \ref{D9.1}(3) (already proved in Lemma~\ref{L9.6}) that
$$A
=(\d_{i+2}b\w_i y)\w_{i+1}\d_{i+1}A
=(x\w_i y)\w_{i+1}\d_{i+1}A,$$
and it therefore suffices to prove that 
$$\d_{i+1}A=\d_{i+1}(x\w_i y)\w_i z.$$
This we do by induction in the usual way. We have 
$$\d_{i+1}A=\d_{i+1}[b\w_i(y\w_i z)]$$
such that $b$~and $y\w_i z$ are thin; therefore $\d_{i+1}A$ is thin. For $j<i$ it follows from the inductive hypothesis that $\d_j\d_{i+1}A$ is in $\im\w_{i-1}$ because
$$\d_j\d_{i+1}A=\d_i[\d_j b\w_{i-1}(\d_j y\w_{i-1}\d_j z)]$$
with
$$\d_j b=\d_j x\w_{i-1}\d_i(\d_j y\w_{i-1}\d_j z),$$
and for $j>i+2$ it follows from the inductive hypothesis that $\d_j\d_{i+1}A$ is in $\im\w_i$ because
$$\d_j\d_{i+1}A=\d_{i+1}[\d_j b\w_i(\d_{j-1}y\w_i\d_{j-1}z)]$$
with
$$\d_j b=\d_{j-1}x\w_i\d_{i+1}(\d_{j-1}y\w_i\d_{j-1}z);$$
therefore
$$\d_{i+1}A
=\d_{i+2}\d_{i+1}A\w_i\d_i\d_{i+1}A
=\d_{i+1}(x\w_i y)\w_i z$$
as required.
\end{proof}

\begin{lemma} \label{L9.8}
The wedges in~$X$ satisfy the conditions of Definition \ref{D9.1}(6).
\end{lemma}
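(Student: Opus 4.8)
The plan is to follow the pattern of Lemmas~\ref{L9.6} and~\ref{L9.7}: argue by induction on dimension, recognize the element in question as a thin filler of a complicial horn by means of Lemma~\ref{L9.4}, use Lemma~\ref{L9.5} to detect thin elements, and use Definition~\ref{D9.1}(1) together with the previously established parts of Definition~\ref{D9.1} to compute faces. The first step is to check that the two sides of Definition~\ref{D9.1}(6) are defined under the same conditions; expanding the faces of~$A$ with the simplicial identities shows that this is so, and that both sides are defined precisely when $w$~has the appropriate boundary, so that we may assume the right-hand side $E=(\d_{i+3}A\w_i w)\w_{i+2}A$ is defined and try to prove $E=A\w_i(w\w_{i+1}\d_i A)$.

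Since $E$~is a wedge it is $(i+3)$-complicial, hence thin by Lemma~\ref{L9.5}, and Definition~\ref{D9.1}(1) gives $\d_{i+2}E=A$ and $\d_{i+4}E=\d_{i+3}A\w_i w\in\im\w_i$ at once. For $j<i$ the same computation yields $\d_j E=(\d_{i+2}\d_j A\w_{i-1}\d_j w)\w_{i+1}\d_j A$, which is a priori only a $\w_{i+1}$-wedge; but $\d_j A$ is again of the form $\d_{i'+2}[(x'\w_{i'+1}y')\w_{i'+1}(y'\w_{i'}z')]$ with $i'=i-1$, so the inductive hypothesis of Definition~\ref{D9.1}(6), applied in this lower dimension, rewrites the face as $\d_j A\w_{i-1}(\d_j w\w_i\d_{i-1}\d_j A)\in\im\w_{i-1}$. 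Similarly, for $j>i+4$ the face $\d_j E$ comes out in the form $(\d_{i+3}A''\w_i w'')\w_{i+2}A''$ with $A''=\d_{j-1}A$ of the form required by Definition~\ref{D9.1}(6) with the same index~$i$, so the inductive hypothesis again rewrites it as an element of $\im\w_i$.

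The crux is the remaining face $\d_{i+3}E$, the ``middle'' face of the $\w_{i+2}$-wedge~$E$, which is not determined by the wedge axioms; this is the step I expect to be the main obstacle, since the index gap~$2$ between the two wedge operations in~$E$ is exactly the case not covered by the already proved parts of Definition~\ref{D9.1}. To handle it I would substitute $A=\d_{i+2}B$ with $B=(x\w_{i+1}y)\w_{i+1}(y\w_i z)$, use Definition~\ref{D9.1}(1) to rewrite $\d_{i+3}A$, and then apply the already established parts of Definition~\ref{D9.1} (in particular (3)--(5)) together with the simplicial identities until $\d_{i+3}E$ is displayed explicitly as a $\w_i$-wedge, hence as an element of $\im\w_i$. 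Once all the hypotheses of Lemma~\ref{L9.4} are verified, that lemma gives $E=\d_{i+2}E\w_i\d_i E$; since $\d_{i+2}E=A$ and $\d_i E=\d_i(\d_{i+3}A\w_i w)\w_{i+1}\d_i A=w\w_{i+1}\d_i A$, we conclude $E=A\w_i(w\w_{i+1}\d_i A)$, which is the left-hand side of Definition~\ref{D9.1}(6). Everything apart from the middle-face computation is routine bookkeeping with Definition~\ref{D9.1}(1) and the inductive hypothesis.
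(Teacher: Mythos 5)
Your overall architecture is sound and is essentially the mirror image of the paper's: the paper starts from the left-hand side $B=A\w_i(w\w_{i+1}\d_i A)$, which is thin because it is a wedge, and applies Lemma~\ref{L9.4} at index $i+2$ to get $B=\d_{i+4}B\w_{i+2}\d_{i+2}B=(\d_{i+3}A\w_i w)\w_{i+2}A$; you start from the right-hand side $E=(\d_{i+3}A\w_i w)\w_{i+2}A$ and apply Lemma~\ref{L9.4} at index~$i$. Your treatment of the outer faces is correct: for $j<i$ and for $j>i+4$ the face $\d_j E$ is again an instance of one side of Definition~\ref{D9.1}(6) in lower dimension (with index $i-1$ when $j<i$), so the inductive hypothesis converts it into a wedge of the required type, exactly as in the paper. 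Your identification of the definedness condition is also correct.

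The genuine gap is the step you yourself flag as the main obstacle: showing that the undetermined middle face ($\d_{i+3}E$ in your setup, $\d_{i+1}B$ in the paper's) lies in $\im\w_i$ (respectively $\im\w_{i+1}$). Your plan --- rewrite it ``explicitly as a $\w_i$-wedge'' by manipulating axioms (3)--(5) and the simplicial identities --- is not carried out and is not the right tool; there is no reason to expect the middle face to be expressible by those identities, since the index gap~$2$ is precisely the configuration they do not reach. The paper's device is a second, nested application of Lemma~\ref{L9.4}: one never identifies the factors of the middle face in advance, one only shows it is \emph{thin} and that its own outer faces are wedges, whereupon Lemma~\ref{L9.4} exhibits it as $\d_{i+2}(\cdot)\w_i\d_i(\cdot)$ and hence as a member of $\im\w_i$ for free. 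The thinness comes from the second half of Lemma~\ref{L9.5}: $A$ is itself thin, being the face $\d_{i+2}$ of a wedge of two thin wedges, so both arguments of the outer wedge are thin and the middle face is thin; the outer faces of the middle face are then handled by the same inductive hypothesis you already invoked for the outer faces of~$E$. With that nested argument inserted, your proof closes; without it, the decisive step is missing.
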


\begin{proof}
Again one side is defined if and only if the other is; we may therefore assume that the left side is defined. Let
$$A=\d_{i+2}[(x\w_{i+1}y)\w_{i+1}(y\w_i z)]$$
and let
$$B=A\w_i(w\w_{i+1}\d_i A);$$
we must show that
$$B=(\d_{i+3}A\w_i w)\w_{i+2}A$$
and we apply the usual inductive argument. We write $A_j=\d_j A$, so that
$$A_j=\begin{cases}
\d_{i+1}[(\d_j x\w_i\d_j y)\w_i(\d_j y\w_{i-1}\d_j z)]& (j<i),\\
\d_{i+2}[(\d_{j-1}x\w_{i+1}\d_{j-1}y)\w_{i+1}(\d_{j-1}y\w_i\d_{j-1}z)]& (j>i+3);
\end{cases}$$
in each of these cases $A_j$~is of the same form as~$A$. For $j<i$ it follows from the inductive hypothesis that $\d_j B$ is in $\im\w_{i+1}$ because
$$\d_j B=A_j\w_{i-1}(\d_j w\w_i \d_{i-1}A_j),$$
we certainly have
$$\d_i B=w\w_{i+1}\d_i A\in\im\w_{i+1},$$
and for $j>i+4$ it follows from the inductive hypothesis that $\d_j B$ is in $\im\w_{i+2}$ because
$$\d_j B=A_{j-1}\w_i(\d_{j-2}w\w_{i+1}\d_i A_{j-1}).$$
If we can show that $\d_{i+1}B\in\im\w_{i+1}$ then it will follow that
$$B=\d_{i+4}B\w_{i+2}\d_{i+2}B=(\d_{i+3}A\w_i w)\w_{i+2}A$$
as required.

To show that $\d_{i+1}B\in\im\w_{i+1}$ we again apply the usual inductive argument. By Lemma~\ref{L9.5} $A$~is thin, hence $\d_{i+1}B$ is thin. For $j<i$ it follows from the inductive hypothesis that $\d_j\d_{i+1}B$ is in $\im\w_i$ because
$$\d_j\d_{i+1}B=\d_i[A_j\w_{i-1}(\d_j w\w_i\d_{i-1}A_j)],$$
we certainly have
$$\d_i\d_{i+1}B=\d_i w\w_i\d_i\d_i A\in\im\w_i,$$
and for $j>i+3$ it follows from the inductive hypothesis that $\d_j\d_{i+1}B$ is in $\im\w_{i+1}$ because
$$\d_j\d_{i+1}B=\d_{i+1}[A_j\w_i(\d_{j-1}w\w_{i+1}\d_i A_j)];$$
therefore $\d_{i+1}B\in\im\w_{i+1}$ as required.

This completes the proof.
\end{proof}

\begin{lemma} \label{L9.9}
The wedges in~$X$ satisfy the conditions of Definition \ref{D9.1}(7).
\end{lemma}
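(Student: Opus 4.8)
The plan is to follow the pattern of Lemmas~\ref{L9.6}--\ref{L9.8}. First I would check, as for the earlier axioms, that each side of Definition~\ref{D9.1}(7) is defined exactly when $\d_i x=\d_{i+1}y$, $\d_i z=\d_{i+1}w$, $\d_{j-1}x=\d_j z$ and $\d_{j-1}y=\d_j w$ all hold: the equality $\d_j(x\w_i y)=\d_{j+1}(z\w_i w)$ implicit in the left-hand side and the equality $\d_i(x\w_{j-1}z)=\d_{i+1}(y\w_{j-1}w)$ implicit in the right-hand side both reduce to these four conditions, because the wedges concerned are complicial elements and hence determined by their faces. So I may assume the left side is defined and set $A=(x\w_i y)\w_j(z\w_i w)$. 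Since $i+2<j$, Definition~\ref{D9.1}(1) gives $\d_i A=y\w_{j-1}w$ and $\d_{i+2}A=x\w_{j-1}z$, so the identity to be proved is exactly $A=\d_{i+2}A\w_i\d_i A$, which is the conclusion of Lemma~\ref{L9.4} for $A$ with wedge-index~$i$.

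The argument is an induction on $m$, the common dimension of $x,y,z,w$. The element~$A$ is a wedge, hence thin by Lemma~\ref{L9.5}, so by Lemma~\ref{L9.4} it is enough to verify that $\d_\ell A\in\im\w_{i-1}$ for $0\leq\ell<i$ and that $\d_\ell A\in\im\w_i$ for $i+2<\ell\leq m+2$. Definition~\ref{D9.1}(1) gives $\d_\ell A=(\d_\ell x\w_{i-1}\d_\ell y)\w_{j-1}(\d_\ell z\w_{i-1}\d_\ell w)$ for $\ell<i$, which lies in $\im\w_{i-1}$ by the inductive hypothesis applied with the index pair $(i-1,j-1)$; it gives $\d_\ell A=(\d_{\ell-1}x\w_i\d_{\ell-1}y)\w_{j-1}(\d_{\ell-1}z\w_i\d_{\ell-1}w)$ for $i+2<\ell<j$, a range which is empty when $i=j-3$ and which otherwise lies in $\im\w_i$ by the inductive hypothesis with the pair $(i,j-1)$; it gives $\d_j A=z\w_i w$ and $\d_{j+2}A=x\w_i y$, both in $\im\w_i$; and it gives $\d_\ell A=(\d_{\ell-2}x\w_i\d_{\ell-2}y)\w_j(\d_{\ell-2}z\w_i\d_{\ell-2}w)$ for $j+2<\ell\leq m+2$, which lies in $\im\w_i$ by the inductive hypothesis in lower dimension.

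The one case that does not yield to a direct computation is $\ell=j+1$, and I expect this to be the main obstacle: $\d_{j+1}A$ is the ``missing'' face of the outer $\w_j$ and so has no formula from Definition~\ref{D9.1}(1). I would handle it by a nested application of the same machinery. The wedges $x\w_i y$ and $z\w_i w$ are thin (Lemma~\ref{L9.5}), so $\d_{j+1}A$ is thin by the second clause of Lemma~\ref{L9.5}; one then applies Lemma~\ref{L9.4} to $\d_{j+1}A$ with wedge-index~$i$. The simplicial identities $\d_\ell\d_{j+1}=\d_j\d_\ell$ for $\ell\leq j$ and $\d_\ell\d_{j+1}=\d_{j+1}\d_{\ell+1}$ for $\ell\geq j+1$ express each face $\d_\ell\d_{j+1}A$ (with $\ell<i$, or $i+2<\ell\leq m+1$) in terms of a face of~$A$ computed above; whenever this produces a ``missing'' face of one of those wedges, one first rewrites it using the inductive form of Definition~\ref{D9.1}(7), after which it plainly lies in $\im\w_{i-1}$ or $\im\w_i$ as required. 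Lemma~\ref{L9.4} then gives $\d_{j+1}A=\d_{i+2}\d_{j+1}A\w_i\d_i\d_{j+1}A\in\im\w_i$, completing the verification and the proof. Apart from this nested step the work is essentially bookkeeping: tracking the position of each face index relative to $i,i+1,i+2,j,j+1,j+2$, and treating separately the boundary value $i=j-3$ at which some of these coincide.
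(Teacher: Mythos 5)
Your proposal is correct and follows essentially the same route as the paper: the same induction on dimension, the same reduction via Lemma~\ref{L9.4} after computing the faces of $A=(x\w_i y)\w_j(z\w_i w)$ from Definition~\ref{D9.1}(1) and the lower-dimensional case of axiom~(7), and the same nested application of Lemmas \ref{L9.5}~and~\ref{L9.4} to dispose of the missing face $\d_{j+1}A$. The only differences are cosmetic (your explicit remark that the range $i+2<\ell<j$ may be empty, and your discussion of when the two sides are defined, which the paper states more briefly).
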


\begin{proof}
Again we find that one side is defined if and only if the other is, so we may assume that the left side is defined. Let
$$A=(x\w_i y)\w_j(z\w_i w)$$
with $i\leq j-3$; we will apply the usual inductive argument to show that
$$A=(x\w_{j-1}z)\w_i(y\w_{j-1}w).$$

For $k<i$ it follows from the inductive hypothesis that
$$\d_k A=(\d_k x\w_{i-1}\d_k y)\w_{j-1}(\d_k z\w_{i-1}\d_k w)\in\im\w_{i-1},$$
for $i+2<k<j$ it follows from the inductive hypothesis that
$$\d_k A=(\d_{k-1}x\w_i\d_{k-1}y)\w_{j-1}(\d_{k-1}z\w_i\d_{k-1}w)\in\im\w_i,$$
we certainly have
$$\d_j A=z\w_i w\in\im\w_i$$
and
$$\d_{j+2}A=x\w_i y\in\im\w_i,$$
and for $k>j+2$ it follows from the inductive hypothesis that
$$\d_k A=(\d_{k-2}x\w_i\d_{k-2}y)\w_j(\d_{k-2}z\w_i\d_{k-2}w)\in\im\w_i.$$
If we can show that $\d_{j+1}A\in\im\w_i$ then it will follow that
$$A=\d_{i+2}A\w_i\d_i A=(x\w_{j-1}z)\w_i(y\w_{j-1}w).$$

To show that $\d_{j+1}A\in\im\w_i$ we again apply the usual inductive argument. It follows from Lemma~\ref{L9.5} that $\d_{j+1}A$ is thin. For $k<i$ it follows from the inductive hypothesis that
$$\d_k\d_{j+1}A
=\d_j[(\d_k x\w_{i-1}\d_k y)\w_{j-1}(\d_k z\w_{i-1}\d_k w)]
\in\im\w_{i-1},$$
for $i+2<k<j$ it follows from the inductive hypothesis that
$$\d_k\d_{j+1}A
=\d_j[(\d_{k-1}x\w_i\d_{k-1}y)\w_{j-1}(\d_{k-1}z\w_i\d_{k-1}w)]\in\im\w_i,$$
we certainly have
$$\d_j\d_{j+1}A=\d_{j-1}z\w_i\d_{j-1}w\in\im\w_i$$
and
$$\d_{j+1}\d_{j+1}A=\d_j x\w_i\d_j y\in\im\w_i,$$
and for $k>j+1$ it follows from the inductive hypothesis that
$$\d_k\d_{j+1}A
=\d_{j+1}[(\d_{k-1}x\w_i\d_{k-1}y)\w_j(\d_{k-1}z\w_i\d_{k-1}w)]
\in\im\w_i;$$
therefore $\d_{j+1}A\in\im\w_i$ as required.

This completes the proof.
\end{proof}

It follows from Lemmas \ref{L9.6}--\ref{L9.9} that the wedges of Lemma~\ref{L9.3} satisfy all the axioms for sets with complicial identities. This completes the proof of Theorem~\ref{T9.2}.

\end{document}